
\documentclass[12pt]{amsart}
\usepackage{amsmath,amsthm,amsfonts,amssymb,mathrsfs}
\date{\today}

\usepackage{color}

\usepackage{hyperref}

  \setlength{\textwidth}{18.5truecm}
   \setlength{\textheight}{25truecm}
   \setlength{\oddsidemargin}{-28.5pt}
   \setlength{\evensidemargin}{-28.5pt}
   \setlength{\topmargin}{-30pt}


\newtheorem{theorem}{Theorem}[section]

\newtheorem{proposition}[theorem]{Proposition}
\newtheorem{corollary}[theorem]{Corollary}
\newtheorem{lemma}[theorem]{Lemma}
\theoremstyle{definition}
\newtheorem{example}[theorem]{Example}
\newtheorem{remark}[theorem]{Remark}


\begin{document}

\title[On variants of the extended bicyclic semigroup]{On variants of the extended bicyclic semigroup}

\author[Oleg~Gutik and Kateryna Maksymyk]{Oleg~Gutik and Kateryna Maksymyk}
\address{Faculty of Mathematics, National University of Lviv,
Universytetska 1, Lviv, 79000, Ukraine}
\email{oleg.gutik@lnu.edu.ua, ovgutik@yahoo.com, kate.maksymyk15@gmail.com}

\keywords{Semigroup, interassociate of a semigroup, variant of a semigroup, bicyclic monoid, extended bicyclic semigroup, semitopological semigroup}

\subjclass[2010]{20M10, 22A15.}

\begin{abstract}
In the paper we describe the group $\mathbf{Aut}\left(\mathscr{C}_{\mathbb{Z}}\right)$ of automorphisms of the extended bicyclic semigroup $\mathscr{C}_{\mathbb{Z}}$ and study the variants $\mathscr{C}_{\mathbb{Z}}^{m,n}$ of the extended bicycle semigroup $\mathscr{C}_{\mathbb{Z}}$, where $m,n\in\mathbb{Z}$. In particular, we prove that $\mathbf{Aut}\left(\mathscr{C}_{\mathbb{Z}}\right)$ is isomorphic to the additive group of integers, the extended bicyclic semigroup $\mathscr{C}_{\mathbb{Z}}$ and every its variant are not finitely generated,  and  describe the subset of idempotents $E(\mathscr{C}_{\mathbb{Z}}^{m,n})$ and Green's relations on the semigroup $\mathscr{C}_{\mathbb{Z}}^{m,n}$. Also we show that $E(\mathscr{C}_{\mathbb{Z}}^{m,n})$ is an $\omega$-chain  and any two variants  of the extended bicyclic semigroup $\mathscr{C}_{\mathbb{Z}}$ are isomorphic. At the end we discuss shift-continuous Hausdorff topologies on the variant $\mathscr{C}_{\mathbb{Z}}^{0,0}$. In particular, we prove that if $\tau$ is a Hausdorff shift-continuous topology on $\mathscr{C}_{\mathbb{Z}}^{0,0}$ then each of inequalities $a>0$ or $b>0$ implies that $(a,b)$ is an isolated point of $\big(\mathscr{C}_{\mathbb{Z}}^{0,0},\tau\big)$ and construct an example of a Hausdorff semigroup topology $\tau^*$ on the semigroup $\mathscr{C}_{\mathbb{Z}}^{0,0}$ such that all its points with $ab\leqslant 0$ and $a+b\leqslant 0$ are not isolated in $\big(\mathscr{C}_{\mathbb{Z}}^{0,0},\tau^*\big)$.
\end{abstract}

\maketitle

\section{Introduction and preliminaries}\label{section-0}

We shall follow the terminology of \cite{Carruth-Hildebrant-Koch-1983-1986, Clifford-Preston-1961-1967, Engelking-1989, Ruppert-1984}. In this paper all spaces are assumed to be Hausdorff. By $\mathbb{Z}$, $\mathbb{N}_0$ and $\mathbb{N}$ we denote the sets of all integers, non-negative integers and positive integers, respectively.

A \emph{semigroup} is a non-empty set with a binary associative
operation.

If $S$ is a semigroup, then we shall denote the Green relations on $S$ by $\mathscr{R}$, $\mathscr{L}$, $\mathscr{J}$, $\mathscr{D}$ and $\mathscr{H}$ (see \cite{Clifford-Preston-1961-1967}). For every $a\in S$ by $\mathbf{R}_a$, $\mathbf{L}_a$ and $\mathbf{H}_a$ we denote the $\mathscr{R}$-, $\mathscr{L}$- and $\mathscr{H}$-class in $S$ which contains the element $a$, respectively.   A semigroup $S$ is called \emph{simple} if $S$ does not contain proper two-sided ideals and \emph{bisimple} if $S$ has only one $\mathscr{D}$-class.

If $S$ is a semigroup, then we shall denote the subset of all
idempotents in $S$ by $E(S)$. If
$E(S)$ is closed under multiplication, we shall refer to $E(S)$ a as
\emph{band} (or the \emph{band of} $S$). The semigroup
operation on $S$ determines the following partial order $\preccurlyeq$
on $E(S)$: $e\preccurlyeq f$ if and only if $ef=fe=e$. This order is
called the {\em natural partial order} on $E(S)$. A
\emph{semilattice} is a commutative semigroup of idempotents. A
semilattice $E$ is called {\em linearly ordered} or a \emph{chain}
if its natural order is a linear order. A \emph{maximal chain} of a
semilattice $E$ is a chain which is not properly contained in any other
chain of $E$.

The Axiom of Choice implies the existence of maximal chains in every
partially ordered set. According to
\cite[Definition~II.5.12]{Petrich-1984}, a chain $L$ is called an
$\omega$-chain if $L$ is isomorphic to $\{0,-1,-2,-3,\ldots\}$ with
the usual order $\leqslant$ or equivalently, if $L$ is isomorphic to
$\left(\mathbb{N}_0,\max\right)$.

The \emph{bicyclic semigroup} (or the \emph{bicyclic monoid}) ${\mathscr{C}}(p,q)$ is the semigroup with
the identity $1$ generated by two elements $p$ and $q$ subject
only to the condition $pq=1$. The bicyclic monoid ${\mathscr{C}}(p,q)$ is a combinatorial bisimple $F$-inverse semigroup (see \cite{Lawson-1998}) and it plays an important role in the algebraic theory of semigroups and in the theory of topological semigroups.
For example the well-known O.~Andersen's result~\cite{Andersen-1952}
states that a ($0$--)simple semigroup is completely ($0$--)simple
if and only if it does not contain the bicyclic semigroup. The
bicyclic semigroup cannot be embedded into the stable
semigroups~\cite{Koch-Wallace-1957}.

An interassociate of a semigroup $(S,\cdot)$ is a semigroup $(S,\ast)$ such that for all $a,b,c\in S$, $a\cdot(b\ast c)=(a\cdot b)\ast c$ and $a\ast(b\cdot c)=(a\ast b)\cdot c$. This definition of interassociativity was studied extensively in 1996 by Boyd,  Gould, and Nelson in \cite{Boyd-Gould-Nelson-1998}. Certain classes of semigroups are known to give rise to interassociates with various properties. For example, it is very easy to show that if $S$ is a monoid, every interassociate must satisfy the condition $a\ast b = a\cdot c\cdot b$ for some fixed element $c\in S$ (see \cite{Boyd-Gould-Nelson-1998}). This type of interassociate
was called a \emph{variant} by Hickey \cite{Hickey-1983}. Variants of semigroups of
binary relations have been studied by Chase \cite{Chase-1979, Chase-1979a, Chase-1982}. Variants of transformation semigroups and their representations have been studied \cite{Dolinka-East-2015, Dolinka-East-2018, East-2018, Huang-1996, Mazorchuk-Tsyaputa-2008}.  A general theory of variants has been developed by
a number of authors; see especially \cite{Hickey-1983, Hickey-1986, Khan-Lawson-2001}. For a recent study of variants of finite full transformation
semigroups, and for further references and historical discussion, see \cite{Dolinka-East-2015} and also \cite[Chapter 13]{Ganyushkin-Mazorchuk-2009}. Variants of semilattices were studied in \cite{Desiateryk-2015, Ganyushkin-Desiateryk-2013}. The articles \cite{DolinkaDE-2018??, DolinkaDEHSSS-2018??, DolinkaDEHSSS-2018??a, Dolinka-East-2015} initiated the study of general sandwich semigroups in arbitrary (locally small) categories. In addition, every interassociate of a completely simple semigroup is completely simple \cite{Boyd-Gould-Nelson-1998}. Finally, it is relatively easy to show that every interassociate of a group is isomorphic to the group itself.

In the paper \cite{Givens-Rosin-Linton-2016??} the bicyclic semigroup ${\mathscr{C}}(p,q)$ and its interassociates are investigated. In particular, if $p$ and $q$ are generators of the bicyclic semigroup ${\mathscr{C}}(p,q)$ and $m$ and $n$ are fixed nonnegative integers, the operation $a\ast_{m,n}b=a\cdot q^mp^n\cdot b$ is known to be an interassociate. It was shown that for distinct pairs $(m, n)$ and $(s, t)$, the interassociates $({\mathscr{C}}(p,q),\ast_{m,n})$ and $(\mathscr{C}(p,q),\ast_{s,t})$ are not isomorphic. Also in \cite{Givens-Rosin-Linton-2016??} the authors generalized a result regarding homomorphisms on ${\mathscr{C}}(p,q)$ to homomorphisms on its interassociates.
Later for fixed non-negative integers $m$ and $n$ the interassociate $({\mathscr{C}}(p,q),\ast_{m,n})$ of the bicyclic monoid $\mathscr{C}(p,q)$ will be denoted by $\mathscr{C}_{m,n}$.

A ({\it semi})\emph{topological semigroup} is a topological space with a (separately) continuous semigroup operation. A topology $\tau$ on a semigroup $S$ is called:
\begin{itemize}
  \item \emph{shift-continuous} if $(S,\tau)$ is a semitopological semigroup;
  \item \emph{semigroup}  if $(S,\tau)$ is a topological semigroup.
\end{itemize}

The bicyclic semigroup admits only the discrete semigroup topology and if a topological semigroup $S$ contains it as a dense subsemigroup then ${\mathscr{C}}(p,q)$ is an open subset of $S$~\cite{Eberhart-Selden-1969}. Bertman and  West in \cite{Bertman-West-1976} extend this result for the case of Hausdorff semitopological semigroups. The stable and $\Gamma$-compact topological semigroups do not contain the bicyclic semigroup~\cite{Anderson-Hunter-Koch-1965, Hildebrant-Koch-1986}. The problem of  embedding of the bicyclic monoid into compact-like topological semigroups studied in \cite{Banakh-Dimitrova-Gutik-2009, Banakh-Dimitrova-Gutik-2010, Gutik-Repovs-2007}. Also in the paper \cite{Fihel-Gutik-2011} it was proved that the discrete topology is the unique topology on the extended bicyclic semigroup $\mathscr{C}_{\mathbb{Z}}$ such that the semigroup operation on $\mathscr{C}_{\mathbb{Z}}$ is separately continuous. Amazing dichotomy for the bicyclic monoid with adjoined zero $\mathscr{C}^0={\mathscr{C}}(p,q)\sqcup\{0\}$ was proved in \cite{Gutik-2015}: every Hausdorff locally compact semitopological bicyclic semigroup with adjoined zero $\mathscr{C}^0$ is either compact or discrete.

In the paper \cite{Gutik-Maksymyk-2016} we studied semitopological interassociates $({\mathscr{C}}(p,q),\ast_{m,n})$ of the bicyclic monoid $\mathscr{C}(p,q)$ for arbitrary non-negative integers $m$ and $n$. Some results from \cite{Bertman-West-1976, Eberhart-Selden-1969, Gutik-2015} obtained for the bicyclic semigroup to its interassociate $({\mathscr{C}}(p,q),\ast_{m,n})$ were extended. In particular, we showed that for arbitrary non-negative integers $m$, $n$ and every Hausdorff topology $\tau$ on $\mathscr{C}_{m,n}$ such that $\left(\mathscr{C}_{m,n},\tau\right)$ is a semitopological semigroup, is discrete. Also, we proved that if an  interassociate of the bicyclic monoid $\mathscr{C}_{m,n}$ is a dense subsemigroup of a Hausdorff semitopological semigroup $(S,\cdot)$ and $I=S\setminus\mathscr{C}_{m,n}\neq\varnothing$ then $I$ is a two-sided ideal of the semigroup $S$ and show that for arbitrary non-negative integers $m$, $n$, any Hausdorff locally compact semitopological semigroup $\mathscr{C}_{m,n}^0$ ($\mathscr{C}_{m,n}^0=\mathscr{C}_{m,n}\sqcup\{0\}$) is either discrete or compact.

On the Cartesian
product $\mathscr{C}_{\mathbb{Z}}=\mathbb{Z}\times\mathbb{Z}$ we
define the semigroup operation as follows:
\begin{equation}\label{eq-0.1}
    (a,b)\cdot(c,d)=
\left\{
  \begin{array}{ll}
    (a-b+c,d), & \hbox{if }~b<c; \\
    (a,d),     & \hbox{if }~b=c; \\
    (a,d+b-c), & \hbox{if }~b>c,\\
  \end{array}
\right.
\end{equation}
for $a,b,c,d\in\mathbb{Z}$. The set $\mathscr{C}_{\mathbb{Z}}$ with
such defined operation will be called the \emph{extended bicyclic semigroup}~\cite{Warne-1968}.

In the paper \cite{Fihel-Gutik-2011} algebraic properties of $\mathscr{C}_{\mathbb{Z}}$ were described and it was proved therein that every non-trivial
congruence $\mathfrak{C}$ on the semigroup
$\mathscr{C}_{\mathbb{Z}}$ is a group congruence, and moreover the
quotient semigroup $\mathscr{C}_{\mathbb{Z}}/\mathfrak{C}$ is
isomorphic to a cyclic group. Also it was shown that the semigroup
$\mathscr{C}_{\mathbb{Z}}$ as a Hausdorff semitopological semigroup
admits only the discrete topology and the closure
$\operatorname{cl}_T\left(\mathscr{C}_{\mathbb{Z}}\right)$ of the
semigroup $\mathscr{C}_{\mathbb{Z}}$ in a topological semigroup $T$ was studied.

In this paper we describe the group $\mathbf{Aut}\left(\mathscr{C}_{\mathbb{Z}}\right)$ of automorphisms of the extended bicyclic semigroup $\mathscr{C}_{\mathbb{Z}}$ and study a variant $\mathscr{C}_{\mathbb{Z}}^{m,n}=\left(\mathscr{C}_{\mathbb{Z}},\ast_{m,n}\right)$ of the extended bicycle semigroup $\mathscr{C}_{\mathbb{Z}}$, where $m,n\in\mathbb{Z}$, which is defined by the formula
\begin{equation}\label{eq-0.2}
  (a,b)\ast_{m,n}(c,d)=(a,b)\cdot(m,n)\cdot(c,d).
\end{equation}
In particular, we prove that $\mathbf{Aut}\left(\mathscr{C}_{\mathbb{Z}}\right)$ is isomorphic to the additive group of integers, the extended bicyclic semigroup $\mathscr{C}_{\mathbb{Z}}$ and every its variant are not finitely generated, and describe the subset of idempotents $E(\mathscr{C}_{\mathbb{Z}}^{m,n})$ and Green's relations on the semigroup $\mathscr{C}_{\mathbb{Z}}^{m,n}$. Also we show that $E(\mathscr{C}_{\mathbb{Z}}^{m,n})$ is an $\omega$-chain  and any two variants  of the extended bicyclic semigroup $\mathscr{C}_{\mathbb{Z}}$ are isomorphic. At the end we discuss shift-continuous Hausdorff topologies on the variant $\mathscr{C}_{\mathbb{Z}}^{0,0}$. In particular, we prove that if $\tau$ is a Hausdorff shift-continuous topology on $\mathscr{C}_{\mathbb{Z}}^{0,0}$ then each of inequalities $a>0$ or $b>0$ implies that $(a,b)$ is an isolated point of $\big(\mathscr{C}_{\mathbb{Z}}^{0,0},\tau\big)$ and constructe an example a Hausdorff semigroup topology $\tau^*$ on the semigroup $\mathscr{C}_{\mathbb{Z}}^{0,0}$ such that all its points with the properties $ab\leqslant 0$ and $a+b\leqslant 0$ are not isolated in $\big(\mathscr{C}_{\mathbb{Z}}^{0,0},\tau^*\big)$.


\section{On the group of automorphisms of the extended bicyclic semigroup}\label{section-1}

\begin{lemma}\label{lemma-1.1}
For arbitrary integer $k$ the set $$\mathscr{C}_{\mathbb{Z}}^{\geqslant k}=\left\{(i,j)\in\mathscr{C}_{\mathbb{Z}}\colon i,j\geqslant k\right\}$$ with the induced from $\mathscr{C}_{\mathbb{Z}}$ semigroup operation is isomorphic to the bicyclic semigroup $\mathscr{C}(p,q)$ by the mapping $h_k\colon \mathscr{C}(p,q)\to \mathscr{C}_{\mathbb{Z}}^{\geqslant k}$, $q^ip^j\mapsto (i+k,j+k)$.
\end{lemma}

\begin{proof} 
Since
\begin{equation*}
\begin{split}
  h_k\left(q^mp^n\cdot q^ip^j\right) &
=
\left\{
  \begin{array}{ll}
    h_k\left(q^{m-n+i}p^j\right), & \hbox{if~} m\leqslant i;\\
    h_k\left(q^np^{m-i+j}\right), & \hbox{if~} m\geqslant i
  \end{array}
\right.
=   \\
    & =
\left\{
  \begin{array}{ll}
   (m-n+i+k,j+k), & \hbox{if~} m\leqslant i;\\
   (n+k,m-i+j+k), & \hbox{if~} m\geqslant i
  \end{array}
\right.
\end{split}
\end{equation*}
and
\begin{equation*}
\begin{split}
  h_k\left(q^mp^n\right)\cdot h_k\left(q^ip^j\right) & =(m+k,n+k)\cdot (i+k,j+k)= \\
    & =
\left\{
  \begin{array}{ll}
   (m-n+i+k,j+k), & \hbox{if~} m\leqslant i;\\
   (n+k,m-i+j+k), & \hbox{if~} m\geqslant i.
  \end{array}
\right.
\end{split}
\end{equation*}
for any $q^mp^n, q^ip^j\in \mathscr{C}(p,q)$ we have that the so defined map $h_k\colon \mathscr{C}(p,q)\to \mathscr{C}_{\mathbb{Z}}^{\geqslant k}$ is a homomorphism, and simple verifications imply that it is a bijection.
\end{proof}

\begin{theorem}\label{theorem-1.2}
For an arbitrary integer $k$ the map $h_k\colon \mathscr{C}_{\mathbb{Z}}\to \mathscr{C}_{\mathbb{Z}}$ defined by by the formula
\begin{equation}\label{eq-1.1}
  h_k\left((i,j)\right)=(i+k,j+k),
\end{equation}
is an automorphism of the extended bicyclic semigroup $\mathscr{C}_{\mathbb{Z}}$ and every automorphism $\mathfrak{h}\colon \mathscr{C}_{\mathbb{Z}}\to \mathscr{C}_{\mathbb{Z}}$  of $\mathscr{C}_{\mathbb{Z}}$ has form \eqref{eq-1.1}. Moreover, the group $\mathbf{Aut}\left(\mathscr{C}_{\mathbb{Z}}\right)$ of automorphisms of $\mathscr{C}_{\mathbb{Z}}$ is isomorphic to the additive group of integers $\mathbb{Z}(+)$ and this isomorphism $\mathfrak{H}\colon \mathbb{Z}(+)\to \mathbf{Aut}\left(\mathscr{C}_{\mathbb{Z}}\right)$ is defined by the formula $\mathfrak{H}(k)=h_k$, $k\in\mathbb{Z}$.
\end{theorem}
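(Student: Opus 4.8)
The plan is to verify the easy direction first, then pin down an arbitrary automorphism by showing it must act affinely on the index set. For the first claim, I would observe that $h_k$ coincides on each subsemigroup $\mathscr{C}_{\mathbb{Z}}^{\geqslant \ell}$ with the isomorphism furnished by Lemma~\ref{lemma-1.1} (precisely, $h_k\circ h_\ell = h_{\ell+k}$ as maps from $\mathscr{C}(p,q)$), but it is cleaner simply to check directly from the multiplication~\eqref{eq-0.1} that $h_k\bigl((a,b)\cdot(c,d)\bigr) = h_k((a,b))\cdot h_k((c,d))$: each of the three cases $b<c$, $b=c$, $b>c$ is preserved under adding $k$ to every coordinate, and the output coordinates are translated by $k$ accordingly. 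Bijectivity of $h_k$ is obvious with inverse $h_{-k}$, so $h_k\in\mathbf{Aut}\left(\mathscr{C}_{\mathbb{Z}}\right)$.

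Next I would analyse an arbitrary automorphism $\mathfrak{h}$. The key structural fact is that $\mathfrak{h}$ must preserve the set of idempotents $E(\mathscr{C}_{\mathbb{Z}}) = \{(i,i)\colon i\in\mathbb{Z}\}$ and the natural partial order on it; since this order is the $\omega^*$-chain $\cdots \preccurlyeq (2,2)\preccurlyeq (1,1) \preccurlyeq (0,0)\preccurlyeq (-1,-1)\preccurlyeq\cdots$ (a chain order-isomorphic to $(\mathbb{Z},\leqslant)$ reversed, with no endpoints), any order-automorphism of it is a shift: there is $k\in\mathbb{Z}$ with $\mathfrak{h}((i,i)) = (i+k,i+k)$ for all $i$. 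Here one must be slightly careful — a priori an automorphism of the chain $(\mathbb{Z},\leqslant)$ need not be a translation — so the argument should use more than the order: the covering relation together with the fact that between consecutive idempotents $(i+1,i+1)\preccurlyeq(i,i)$ the structure of $\mathscr{C}_{\mathbb{Z}}$ forces uniform behaviour. Concretely, I expect to use that $(a,b)\,\mathscr{R}\,(a,b')$ and $(a,b)\,\mathscr{L}\,(a',b)$ in $\mathscr{C}_{\mathbb{Z}}$, that the $\mathscr{R}$-class of the idempotent $(i,i)$ meets the $\mathscr{L}$-class of $(j,j)$ in the single element $(i,j)$, and that $\mathfrak{h}$ permutes $\mathscr{R}$- and $\mathscr{L}$-classes consistently with its action on idempotents. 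Once $\mathfrak{h}((i,i))=(i+k,i+k)$ is known, replacing $\mathfrak{h}$ by $h_{-k}\circ\mathfrak{h}$ reduces to the case where $\mathfrak{h}$ fixes every idempotent; then for a general $(i,j)$, writing it via its $\mathscr{R}$- and $\mathscr{L}$-classes as the unique element in $\mathbf{R}_{(i,i)}\cap\mathbf{L}_{(j,j)}$ and using that $\mathfrak{h}$ fixes both $\mathbf{R}_{(i,i)}$ and $\mathbf{L}_{(j,j)}$ setwise forces $\mathfrak{h}((i,j))=(i,j)$, i.e.\ $\mathfrak{h}=\mathrm{id}$, so the original $\mathfrak{h}=h_k$.

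Finally, for the group structure, the map $\mathfrak{H}\colon\mathbb{Z}(+)\to\mathbf{Aut}\left(\mathscr{C}_{\mathbb{Z}}\right)$, $k\mapsto h_k$, is a homomorphism because $h_k\circ h_\ell = h_{k+\ell}$ (adding $k$ then $\ell$ to each coordinate), it is surjective by the previous paragraph, and it is injective since $h_k=\mathrm{id}$ forces $k=0$ (evaluate at $(0,0)$). Hence $\mathfrak{H}$ is an isomorphism.

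\medskip

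I expect the main obstacle to be the rigidity step: showing that an automorphism fixing the structure of the idempotent chain must in fact be a coordinate-translation rather than some more exotic bijection. The subtlety is that the abstract order type of $E(\mathscr{C}_{\mathbb{Z}})$ alone (a dense-free chain isomorphic to $\mathbb{Z}$) has automorphisms that are not translations, so the proof genuinely needs the ambient multiplicative structure — specifically the way each non-idempotent $(i,j)$ is determined by the pair of idempotents $\bigl((i,i),(j,j)\bigr)$ together with the compatibility of $\mathscr{R}$- and $\mathscr{L}$-classes — to rule those out. All remaining verifications (the case analysis for $h_k$ being a homomorphism, and $h_k\circ h_\ell=h_{k+\ell}$) are routine arithmetic.
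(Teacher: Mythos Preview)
Your plan is essentially the paper's proof: direct verification that each $h_k$ is a bijective homomorphism via the case analysis in~\eqref{eq-0.1}, determination of an arbitrary $\mathfrak{h}$ on idempotents via the natural order, extension to all elements using $\mathbf{H}_{(i,j)}=\mathbf{R}_{(i,i)}\cap\mathbf{L}_{(j,j)}$ together with triviality of $\mathscr{H}$-classes, and the routine check $h_k\circ h_\ell=h_{k+\ell}$. The paper works directly rather than composing with $h_{-k}$, but this is cosmetic.

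One correction: the ``main obstacle'' you identify is illusory. Every order-automorphism of the chain $(\mathbb{Z},\leqslant)$ \emph{is} a translation, because such a bijection preserves the covering relation --- if $f(n)=m$ then $f(n+1)$ must cover $m$, so $f(n+1)=m+1$, and induction gives $f(n)=n+f(0)$. The paper's inductive argument (that $(i+1,i+1)$ is the maximum idempotent strictly below $(i,i)$, and dually) is exactly this observation phrased in semilattice language. So the semilattice structure of $E(\mathscr{C}_{\mathbb{Z}})$ alone already forces $\mathfrak{h}((i,i))=(i+k,i+k)$; the Green's-class argument is needed only to pass from idempotents to arbitrary $(i,j)$, not to rule out exotic permutations of $E(\mathscr{C}_{\mathbb{Z}})$.
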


\begin{proof}
For any $(m,n), (i,j)\in \mathscr{C}_{\mathbb{Z}}$ we have that
\begin{equation*}
\begin{split}
  h_k\left((m,n)\cdot(i,j)\right) & =
\left\{
  \begin{array}{ll}
    h_k\left((m-n+i,j)\right), & \hbox{if~} m\leqslant i;\\
    h_k\left((n,m-i+j)\right), & \hbox{if~} m\geqslant i
  \end{array}
\right.
= \\
    & =
\left\{
  \begin{array}{ll}
   (m-n+i+k,j+k), & \hbox{if~} m\leqslant i;\\
   (n+k,m-i+j+k), & \hbox{if~} m\geqslant i
  \end{array}
\right.
\end{split}
\end{equation*}
and
\begin{equation*}
\begin{split}
  h_k\left((m,n)\right)\cdot h_k\left((i,j)\right) & =(m+k,n+k)\cdot (i+k,j+k)= \\
    & =
\left\{
  \begin{array}{ll}
   (m-n+i+k,j+k), & \hbox{if~} m\leqslant i;\\
   (n+k,m-i+j+k), & \hbox{if~} m\geqslant i.
  \end{array}
\right.
\end{split}
\end{equation*}
Simple verifications imply that for every integer $k$ the so defined map $h_k$ is a bijection, and hence it is an automorphism of the extended bicyclic semigroup $\mathscr{C}_{\mathbb{Z}}$.

Let $\mathfrak{h}\colon \mathscr{C}_{\mathbb{Z}}\to \mathscr{C}_{\mathbb{Z}}$  be an arbitrary automorphism of $\mathscr{C}_{\mathbb{Z}}$. Since $(0,0)$ is an idempotent of $\mathscr{C}_{\mathbb{Z}}$, $\mathfrak{h}\left((0,0)\right)$ is an idempotent of $\mathscr{C}_{\mathbb{Z}}$ as well, and hence by Proposition~2.1$(i)$ from \cite{Fihel-Gutik-2011} we have that $\mathfrak{h}\left((0,0)\right)=(k,k)$ for some integer $k$. Since $(1,1)$ is the maximum of the subset
$$
\left\{(n,n)\in E(\mathscr{C}_{\mathbb{Z}})\colon (n,n)\preccurlyeq(0,0)\right\}\setminus\left\{(0,0)\right\}
$$
of the poset $\left(E(\mathscr{C}_{\mathbb{Z}}),\preccurlyeq\right)$ and $\mathfrak{h}\colon \mathscr{C}_{\mathbb{Z}}\to \mathscr{C}_{\mathbb{Z}}$ is an automorphism of $\mathscr{C}_{\mathbb{Z}}$ we get that $\mathfrak{h}\left((1,1)\right)=(k+1,k+1)$ because $(k+1,k+1)$ is the maximum of the subset
$$
\left\{(n,n)\in E(\mathscr{C}_{\mathbb{Z}})\colon (n,n)\preccurlyeq(k,k)=\mathfrak{h}\left((0,0)\right)\right\}\setminus\left\{(k,k)\right\}
$$
of the poset $\left(E(\mathscr{C}_{\mathbb{Z}}),\preccurlyeq\right)$. Then by induction we obtain that $\mathfrak{h}\left((i,i)\right)=(i+k,i+k)$ for every positive integer $i$. Also, since $(-1,-1)$ is the minimum of the subset
$$
\left\{(n,n)\in E(\mathscr{C}_{\mathbb{Z}})\colon (0,0)\preccurlyeq(n,n)\right\}\setminus\left\{(0,0)\right\}
$$
of the poset $\left(E(\mathscr{C}_{\mathbb{Z}}),\preccurlyeq\right)$ and $\mathfrak{h}\colon \mathscr{C}_{\mathbb{Z}}\to \mathscr{C}_{\mathbb{Z}}$ is an automorphism of $\mathscr{C}_{\mathbb{Z}}$ we obtain that $\mathfrak{h}\left((-1,-1)\right)=(k-1,k-1)$ because $(k-1,k-1)$ is the minimum of the subset
$$
\left\{(n,n)\in E(\mathscr{C}_{\mathbb{Z}})\colon (k,k)=\mathfrak{h}\left((0,0)\right)\preccurlyeq(n,n)\right\}\setminus\left\{(k,k)\right\}
$$
of the poset $\left(E(\mathscr{C}_{\mathbb{Z}}),\preccurlyeq\right)$. Then by induction we get that $\mathfrak{h}\left((-i,-i)\right)=(-i+k,-i+k)$ for every positive integer $i$.

Since $\mathfrak{h}\colon \mathscr{C}_{\mathbb{Z}}\to \mathscr{C}_{\mathbb{Z}}$ is an automorphism of $\mathscr{C}_{\mathbb{Z}}$, $\mathscr{C}_{\mathbb{Z}}$ is an inverse semigroup and by Proposition~2.1$(iv)$ of \cite{Fihel-Gutik-2011} every $\mathscr{H}$-class in $\mathscr{C}_{\mathbb{Z}}$ is a singleton, the equalities
\begin{equation*}
  \mathbf{L}_{(i,j)}=\mathbf{L}_{(j,j)}, \qquad  \mathbf{R}_{(i,j)}=\mathbf{R}_{(i,i)} \qquad \hbox{and} \qquad \mathbf{H}_{(i,j)}=\mathbf{L}_{(i,j)}\cap\mathbf{R}_{(i,j)}
\end{equation*}
imply that
\begin{equation*}
  \mathbf{L}_{\mathfrak{h}((i,j))}=\mathbf{L}_{\mathfrak{h}((j,j))}, \qquad  \mathbf{R}_{\mathfrak{h}((i,j))}=\mathbf{R}_{\mathfrak{h}((i,i))} \qquad \hbox{and} \qquad \mathbf{H}_{\mathfrak{h}((i,j))}=\mathbf{L}_{\mathfrak{h}((i,j))}\cap\mathbf{R}_{\mathfrak{h}((i,j))},
\end{equation*}
and hence we have that
\begin{equation*}
  \left\{\mathfrak{h}((i,j))\right\}=\mathbf{H}_{\mathfrak{h}((i,j))}=\mathbf{L}_{\mathfrak{h}((i,j))}\cap\mathbf{R}_{\mathfrak{h}((i,j))}= \mathbf{L}_{(i+k,j+k)}\cap\mathbf{R}_{(i+k,j+k)}=\left\{(i+k,j+k)\right\},
\end{equation*}
for all integers $i$ and $j$. This completes the proof of the first statement of the theorem.

For arbitrary integers $k_1$ and $k_2$ we have that
\begin{equation*}
\begin{split}
  \left(h_{k_1}\circ h_{k_2}\right)(i,j) & = h_{k_1}\left( h_{k_2}\left((i,j)\right)\right)= \\
    & =h_{k_1}\left((i+k_2,j+k_2)\right)=\\
    & =(i+k_2+k_1,j+k_2+k_1)=\\
    & = h_{k_1+k_2}(i,j),
\end{split}
\end{equation*}
$h_0\colon \mathscr{C}_{\mathbb{Z}}\to \mathscr{C}_{\mathbb{Z}}$, $(i,j)\mapsto(i,j)$ is the identity automorphism of $\mathscr{C}_{\mathbb{Z}}$ and $h_{-k_1}\colon \mathscr{C}_{\mathbb{Z}}\to \mathscr{C}_{\mathbb{Z}}$, $(i,j)\mapsto(i-k_1,j-k_1)$ is the converse map to the map $h_{k_1}\colon \mathscr{C}_{\mathbb{Z}}\to \mathscr{C}_{\mathbb{Z}}$. This completes the proof of the second statement of the theorem.
\end{proof}

Serhii Bardyla asked the following question  on the Seminar on S-act Theory and Spectral Spaces at Lviv University.

\medskip
\noindent
\textbf{Question.}
\emph{Are the semigroups $\mathscr{C}_{\mathbb{Z}}$ and $\mathscr{C}_{\mathbb{Z}}^{m,n}$, $m,n\in\mathbb{Z}$, finitely generated?}
\medskip

Later we shall give a negative answer to this  question.

\begin{lemma}\label{lemma-1.4}
For every finite subset $F=\left\{(i_1,j_1),\ldots,(i_n,j_n)\right\}$ of the extended bicyclic semigroup $\mathscr{C}_{\mathbb{Z}}$ there exists a subsemigroup $S$ of $\mathscr{C}_{\mathbb{Z}}$ such that $S$ is isomorphic to the bicyclic semigroup and $S$ contains the semigroup $\left\langle F\right\rangle$ which is generated by the set $F$. Moreover $\left\langle F\right\rangle$ is a subsemigroup of $\mathscr{C}_{\mathbb{Z}}^{\geqslant k}$, where $k=\min\left\{i_1,j_1,\ldots,i_n,j_n\right\}$.
\end{lemma}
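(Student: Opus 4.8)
The plan is to show that $S=\mathscr{C}_{\mathbb{Z}}^{\geqslant k}$ with $k=\min\{i_1,j_1,\ldots,i_n,j_n\}$ already does the job, so the whole statement reduces to two observations: that $\mathscr{C}_{\mathbb{Z}}^{\geqslant k}$ is closed under the operation~\eqref{eq-0.1}, and that it contains $F$.

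First I would verify closure directly from formula~\eqref{eq-0.1}. Suppose $a,b,c,d\geqslant k$. If $b<c$ then $(a,b)\cdot(c,d)=(a-b+c,d)$, and since $c-b\geqslant 1$ in $\mathbb{Z}$ we get $a-b+c\geqslant a\geqslant k$, while $d\geqslant k$; if $b=c$ then $(a,b)\cdot(c,d)=(a,d)$ with both coordinates $\geqslant k$; if $b>c$ then $(a,b)\cdot(c,d)=(a,d+b-c)$ and $b-c\geqslant 1$ gives $d+b-c\geqslant d\geqslant k$, while $a\geqslant k$. Hence the product again lies in $\mathscr{C}_{\mathbb{Z}}^{\geqslant k}$, so $\mathscr{C}_{\mathbb{Z}}^{\geqslant k}$ is a subsemigroup of $\mathscr{C}_{\mathbb{Z}}$.

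Next, by the choice of $k$ each generator $(i_\ell,j_\ell)$ satisfies $i_\ell,j_\ell\geqslant k$, so $F\subseteq\mathscr{C}_{\mathbb{Z}}^{\geqslant k}$; since $\mathscr{C}_{\mathbb{Z}}^{\geqslant k}$ is a subsemigroup we conclude $\langle F\rangle\subseteq\mathscr{C}_{\mathbb{Z}}^{\geqslant k}=S$, which is exactly the ``moreover'' clause. Finally, Lemma~\ref{lemma-1.1} provides the isomorphism $h_k\colon\mathscr{C}(p,q)\to\mathscr{C}_{\mathbb{Z}}^{\geqslant k}$, so $S$ is isomorphic to the bicyclic semigroup and contains $\langle F\rangle$, which completes the argument.

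There is no real obstacle here; the only point worth noting is that the closure computation uses strict inequality of integers to produce a gap of at least $1$, though in fact the inequalities $a-b+c\geqslant a$ and $d+b-c\geqslant d$ would already follow from $b<c$ and $b>c$ over $\mathbb{R}$ as well, so this step is immediate. The content of the lemma is essentially the bookkeeping choice of the cutoff $k$, after which Lemma~\ref{lemma-1.1} supplies everything else.
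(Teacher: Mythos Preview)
Your proof is correct and follows essentially the same route as the paper: take $S=\mathscr{C}_{\mathbb{Z}}^{\geqslant k}$ with $k=\min\{i_1,j_1,\ldots,i_n,j_n\}$, note that $F\subseteq S$ and that $S$ is a subsemigroup, and invoke Lemma~\ref{lemma-1.1} for the isomorphism with $\mathscr{C}(p,q)$. The only cosmetic difference is that the paper extracts the subsemigroup property of $\mathscr{C}_{\mathbb{Z}}^{\geqslant k}$ from Lemma~\ref{lemma-1.1} itself (since $h_k$ is a surjective homomorphism onto it), while you verify closure directly from formula~\eqref{eq-0.1}; the content is identical.
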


\begin{proof}
By Lemma~\ref{lemma-1.1}, $\mathscr{C}_{\mathbb{Z}}^{\geqslant k}$ is an inverse subsemigroup of $\mathscr{C}_{\mathbb{Z}}$ for any integer $k$ and formula \eqref{eq-0.1} implies that $\left\langle F\right\rangle$ is a subsemigroup of $\mathscr{C}_{\mathbb{Z}}^{\geqslant k}$ for $k=\min\left\{i_1,j_1,\ldots,i_n,j_n\right\}$
\end{proof}

The following  theorem is a consequence of Lemma~\ref{lemma-1.4}.

\begin{theorem}\label{theorem-1.5}
The extended bicyclic semigroup $\mathscr{C}_{\mathbb{Z}}$ is not finitely generated as an inverse semigroup.
\end{theorem}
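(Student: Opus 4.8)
The plan is to argue by contradiction, reducing everything to Lemma~\ref{lemma-1.4}. Suppose $\mathscr{C}_{\mathbb{Z}}$ is finitely generated as an inverse semigroup, say by a finite set $F=\left\{(i_1,j_1),\ldots,(i_n,j_n)\right\}$. Since the closure of $F$ under the inverse-semigroup operations coincides with the semigroup generated by $F\cup F^{-1}$, and since the inversion in $\mathscr{C}_{\mathbb{Z}}$ sends $(i,j)$ to $(j,i)$ (so that $F^{-1}$ is again finite and involves exactly the same set of coordinates as $F$), we would obtain $\mathscr{C}_{\mathbb{Z}}=\left\langle F\cup F^{-1}\right\rangle$.

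Next I would invoke Lemma~\ref{lemma-1.4} applied to the finite set $F\cup F^{-1}$: setting $k=\min\left\{i_1,j_1,\ldots,i_n,j_n\right\}$, the lemma gives $\left\langle F\cup F^{-1}\right\rangle\subseteq\mathscr{C}_{\mathbb{Z}}^{\geqslant k}$. Hence $\mathscr{C}_{\mathbb{Z}}\subseteq\mathscr{C}_{\mathbb{Z}}^{\geqslant k}$, which is false because, for instance, $(k-1,k-1)\in\mathscr{C}_{\mathbb{Z}}\setminus\mathscr{C}_{\mathbb{Z}}^{\geqslant k}$. This contradiction completes the proof.

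There is essentially no hard step here; the only point that needs a word of care is the passage from "generated as an inverse semigroup" to "generated as a semigroup by $F\cup F^{-1}$", and the observation that adjoining the inverses does not lower the minimal coordinate $k$, so that Lemma~\ref{lemma-1.4} still applies verbatim. Everything else is immediate from the fact that each $\mathscr{C}_{\mathbb{Z}}^{\geqslant k}$ is a proper subsemigroup of $\mathscr{C}_{\mathbb{Z}}$, and that the union $\bigcup_{k\in\mathbb{Z}}\mathscr{C}_{\mathbb{Z}}^{\geqslant k}=\mathscr{C}_{\mathbb{Z}}$ is a strictly increasing chain with no largest member, so no single $\mathscr{C}_{\mathbb{Z}}^{\geqslant k}$ can exhaust $\mathscr{C}_{\mathbb{Z}}$.
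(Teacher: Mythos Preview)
Your proof is correct and follows the same approach as the paper, which simply records Theorem~\ref{theorem-1.5} as a consequence of Lemma~\ref{lemma-1.4} without further comment. Your write-up is in fact more complete: you make explicit the reduction from inverse-semigroup generation to semigroup generation by $F\cup F^{-1}$ and observe that, since inversion in $\mathscr{C}_{\mathbb{Z}}$ sends $(i,j)$ to $(j,i)$, the minimal coordinate $k$ is unchanged, so Lemma~\ref{lemma-1.4} applies directly.
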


\begin{corollary}\label{corollary-1.6}
The extended bicyclic semigroup $\mathscr{C}_{\mathbb{Z}}$ is not finitely generated as a semigroup.
\end{corollary}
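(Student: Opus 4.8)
The plan is to deduce this immediately from Lemma~\ref{lemma-1.4}, in the same way Theorem~\ref{theorem-1.5} was obtained. Suppose, towards a contradiction, that $\mathscr{C}_{\mathbb{Z}}$ were generated as a semigroup by a finite set $F=\left\{(i_1,j_1),\ldots,(i_n,j_n)\right\}$, and put $k=\min\left\{i_1,j_1,\ldots,i_n,j_n\right\}$. By Lemma~\ref{lemma-1.4} the subsemigroup $\left\langle F\right\rangle$ generated by $F$ is contained in $\mathscr{C}_{\mathbb{Z}}^{\geqslant k}$. But $\mathscr{C}_{\mathbb{Z}}^{\geqslant k}$ is a proper subset of $\mathscr{C}_{\mathbb{Z}}$ --- for instance $(k-1,k-1)\notin\mathscr{C}_{\mathbb{Z}}^{\geqslant k}$ --- so $\left\langle F\right\rangle\subsetneq\mathscr{C}_{\mathbb{Z}}$, contradicting the assumption that $F$ generates $\mathscr{C}_{\mathbb{Z}}$. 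Hence no finite generating set exists.

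Alternatively, one may simply remark that Corollary~\ref{corollary-1.6} is formally weaker than Theorem~\ref{theorem-1.5}: if a finite set $F$ generated $\mathscr{C}_{\mathbb{Z}}$ as a semigroup, then $F$ would also generate $\mathscr{C}_{\mathbb{Z}}$ as an inverse semigroup, since the (plain) subsemigroup generated by $F$ is contained in the inverse subsemigroup generated by $F$. Thus the failure of finite generation as an inverse semigroup, already established in Theorem~\ref{theorem-1.5}, forces the failure of finite generation as a semigroup.

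There is no real obstacle here; all the content sits in Lemma~\ref{lemma-1.4}, whose essential observation is that the multiplication~\eqref{eq-0.1} never produces a coordinate smaller than the minimum of the coordinates of the two factors, so every product of elements of $F$ remains inside $\mathscr{C}_{\mathbb{Z}}^{\geqslant k}$. I would therefore present the argument in the first, self-contained form, keeping it to a couple of lines and citing Lemma~\ref{lemma-1.4} for the key containment.
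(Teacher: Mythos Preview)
Your proposal is correct and matches the paper's treatment: the paper gives no explicit proof of Corollary~\ref{corollary-1.6}, treating it as an immediate consequence of Theorem~\ref{theorem-1.5} (and hence of Lemma~\ref{lemma-1.4}), exactly as in your two alternative arguments. Either form you suggest would be appropriate, and your remark that the content lies entirely in Lemma~\ref{lemma-1.4} is precisely the point.
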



\section{Algebraic properties of the semigroup $\mathscr{C}_{\mathbb{Z}}^{m,n}$}\label{section-2}

Since a semigroup $S$ is simple if and only if $SsS=S$ for every $s\in S$ we have that $S(csc)S=S$ for all $s,c\in S$. Since the extended bicycle semigroup $\mathscr{C}_{\mathbb{Z}}$ is simple, the above arguments imply the following property of the semigroup $\mathscr{C}_{\mathbb{Z}}^{m,n}$.

\begin{proposition}\label{proposition-2.1}
$\mathscr{C}_{\mathbb{Z}}^{m,n}$ is a simple semigroup for all $m,n\in\mathbb{Z}$.
\end{proposition}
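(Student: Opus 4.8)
The plan is to reduce the claim to the criterion recalled immediately above, namely that a semigroup $S$ is simple if and only if $SsS=S$ for each $s\in S$. Since $\mathscr{C}_{\mathbb{Z}}$ is simple, this criterion, applied to the element $csc$ for arbitrary $c,s\in\mathscr{C}_{\mathbb{Z}}$, yields $\mathscr{C}_{\mathbb{Z}}\cdot(csc)\cdot\mathscr{C}_{\mathbb{Z}}=\mathscr{C}_{\mathbb{Z}}$; this is the only input about the semigroup $\mathscr{C}_{\mathbb{Z}}$ that I expect to need.

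Next I would unwind the definition \eqref{eq-0.2} of the variant operation. Fix $x\in\mathscr{C}_{\mathbb{Z}}$. For all $a,b\in\mathscr{C}_{\mathbb{Z}}$, associativity of $\ast_{m,n}$ gives $a\ast_{m,n}x\ast_{m,n}b=a\cdot(m,n)\cdot x\cdot(m,n)\cdot b=a\cdot y\cdot b$, where $y:=(m,n)\cdot x\cdot(m,n)$ is a fixed element of $\mathscr{C}_{\mathbb{Z}}$. Letting $a$ and $b$ run over $\mathscr{C}_{\mathbb{Z}}$, the set $\mathscr{C}_{\mathbb{Z}}^{m,n}\ast_{m,n}x\ast_{m,n}\mathscr{C}_{\mathbb{Z}}^{m,n}$ is, as a subset of the common underlying set $\mathscr{C}_{\mathbb{Z}}$, exactly $\mathscr{C}_{\mathbb{Z}}\cdot y\cdot\mathscr{C}_{\mathbb{Z}}$, which by the previous paragraph equals $\mathscr{C}_{\mathbb{Z}}=\mathscr{C}_{\mathbb{Z}}^{m,n}$.

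Finally I would invoke the same criterion once more, this time for $\mathscr{C}_{\mathbb{Z}}^{m,n}$: we have shown $\mathscr{C}_{\mathbb{Z}}^{m,n}\ast_{m,n}x\ast_{m,n}\mathscr{C}_{\mathbb{Z}}^{m,n}=\mathscr{C}_{\mathbb{Z}}^{m,n}$ for every $x$, hence $\mathscr{C}_{\mathbb{Z}}^{m,n}$ is simple. Equivalently, any two-sided ideal $I$ of $\mathscr{C}_{\mathbb{Z}}^{m,n}$ with $x\in I$ contains $\mathscr{C}_{\mathbb{Z}}^{m,n}\ast_{m,n}x\ast_{m,n}\mathscr{C}_{\mathbb{Z}}^{m,n}=\mathscr{C}_{\mathbb{Z}}^{m,n}$, forcing $I=\mathscr{C}_{\mathbb{Z}}^{m,n}$.

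There is no real obstacle here. The two points worth a moment's attention are: making sure the product set $\mathscr{C}_{\mathbb{Z}}\cdot y\cdot\mathscr{C}_{\mathbb{Z}}$ is genuinely all of $\mathscr{C}_{\mathbb{Z}}$ and not merely a proper two-sided ideal inside it --- which is exactly the content of the quoted criterion (and which, since $\mathscr{C}_{\mathbb{Z}}$ is inverse, also follows from $y=yy^{-1}y\in\mathscr{C}_{\mathbb{Z}}\cdot y\cdot\mathscr{C}_{\mathbb{Z}}$ together with $\mathscr{C}_{\mathbb{Z}}^{1}y\mathscr{C}_{\mathbb{Z}}^{1}=\mathscr{C}_{\mathbb{Z}}$) --- and keeping the two multiplications $\cdot$ and $\ast_{m,n}$ typographically apart while passing between them. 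Notably, no case analysis on the signs of $m$ and $n$ is needed, since the fixed element $(m,n)$ is simply absorbed into $y$.
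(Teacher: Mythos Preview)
Your argument is correct and is essentially the same as the paper's: both use the criterion that $S$ is simple iff $SsS=S$ for every $s$, observe that $\mathscr{C}_{\mathbb{Z}}^{m,n}\ast_{m,n}x\ast_{m,n}\mathscr{C}_{\mathbb{Z}}^{m,n}=\mathscr{C}_{\mathbb{Z}}\cdot\big((m,n)\cdot x\cdot(m,n)\big)\cdot\mathscr{C}_{\mathbb{Z}}$, and invoke simplicity of $\mathscr{C}_{\mathbb{Z}}$. Your write-up simply spells out the details of the paper's one-line remark ``$S(csc)S=S$ for all $s,c\in S$'' more explicitly.
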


\begin{proposition}\label{proposition-2.2}
Let $m$ and $n$ be arbitrary integers. Then an element $(a,b)$ of the semigroup $\mathscr{C}_{\mathbb{Z}}^{m,n}$ is an idempotent if and only if $(a,b)=(n+i,m+i)$ for some $i\in \mathbb{N}_0$.
\end{proposition}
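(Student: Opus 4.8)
The plan is to compute directly when $(a,b)\ast_{m,n}(a,b)=(a,b)$ using the definition \eqref{eq-0.2}, i.e. $(a,b)\cdot(m,n)\cdot(a,b)=(a,b)$, where the product on the right is the ordinary extended bicyclic multiplication \eqref{eq-0.1}. First I would unravel $(a,b)\cdot(m,n)$ by cases according to whether $b<m$, $b=m$, or $b>m$, obtaining an element of $\mathscr{C}_{\mathbb{Z}}$ whose second coordinate I then compare with $a$ to evaluate the second multiplication by $(a,b)$. This is a finite case analysis; in each branch the idempotency equation $(a,b)\cdot(m,n)\cdot(a,b)=(a,b)$ reduces to a pair of linear equalities in $a,b,m,n$. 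Collecting the solutions, I expect the surviving constraint to be exactly $a-b=n-m$ together with $a\geqslant n$ (equivalently $b\geqslant m$), which rewrites as $(a,b)=(n+i,m+i)$ with $i=a-n\in\mathbb{N}_0$.

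A cleaner route, which I would actually present, is to use Lemma~\ref{lemma-1.1} and the structure of $\mathscr{C}_{\mathbb{Z}}$: recall that in $\mathscr{C}_{\mathbb{Z}}$ the product $(a,b)\cdot(c,d)$ behaves like composition of partial shifts, and $(a,b)\cdot(m,n)\cdot(a,b)=(a,b)$ forces $(a,b)$ to be, in the language of the bicyclic-type description, an element on the ``anti-diagonal shifted by $(m,n)$''. Concretely, I would first show that $(n+i,m+i)\ast_{m,n}(n+i,m+i)=(n+i,m+i)$ for every $i\in\mathbb{N}_0$ by a one-line computation: $(n+i,m+i)\cdot(m,n)=(n+i,n+i)$ since $m+i\geqslant m$ (using the third line of \eqref{eq-0.1} when $i>0$ and the middle line when $i=0$), and then $(n+i,n+i)\cdot(n+i,m+i)=(n+i,m+i)$ by the middle line of \eqref{eq-0.1}. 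Since $n+i\geqslant n$ and $m+i\geqslant m$, note $(n+i,m+i)$ indeed lies in the relevant region; this also shows these elements are genuine idempotents of $\mathscr{C}_{\mathbb{Z}}^{m,n}$.

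For the converse, suppose $(a,b)\ast_{m,n}(a,b)=(a,b)$, i.e. $(a,b)\cdot(m,n)\cdot(a,b)=(a,b)$. Applying the associativity of $\cdot$ and examining the three cases for $(a,b)\cdot(m,n)$: if $b<m$ then $(a,b)\cdot(m,n)=(a-b+m,n)$, if $b=m$ then it equals $(a,n)$, and if $b>m$ then it equals $(a,n+b-m)$. In each case the first coordinate of $(a,b)\cdot(m,n)$ is $\geqslant a$ when $b\leqslant m$ and the second coordinate relation with $a$ must be worked out; multiplying on the right by $(a,b)$ and demanding the result equal $(a,b)$ pins down both coordinates. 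I expect every branch except the one yielding $a-b=n-m$ and $a\geqslant n$ to be inconsistent. Then setting $i:=a-n$ gives $a=n+i$, $b=a-(n-m)=m+i$, and consistency forces $i\geqslant 0$, i.e. $i\in\mathbb{N}_0$.

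The main obstacle is purely bookkeeping: keeping the nine (three-by-three) sub-cases of the double product straight and verifying that all but the intended family collapse. There is no conceptual difficulty — the only subtlety is being careful with the strict-versus-non-strict inequalities in \eqref{eq-0.1} at the boundary $i=0$, where the middle line of the multiplication rule is the one in force.
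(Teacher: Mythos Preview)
Your proposal is correct, and the $(\Leftarrow)$ direction matches the paper's computation almost verbatim. The $(\Rightarrow)$ direction, however, is organised differently. You propose a direct nine-case analysis of $(a,b)\cdot(m,n)\cdot(a,b)$, splitting on $b$ versus $m$ and then on the second coordinate of the intermediate product versus $a$. The paper instead first establishes the formulae
\[
(a,b)\ast_{m,n}\mathscr{C}_{\mathbb{Z}}^{m,n}=
\begin{cases}
\{(x,y):x\geqslant a\}, & b\geqslant m,\\
\{(x,y):x\geqslant a-b+m\}, & b<m,
\end{cases}
\]
and the dual one for $\mathscr{C}_{\mathbb{Z}}^{m,n}\ast_{m,n}(a,b)$. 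Since an idempotent $(a,b)$ must lie in both of these sets, the second branches are immediately excluded (for $b<m$ one would need $a\geqslant a-b+m>a$), forcing $b\geqslant m$ and $a\geqslant n$ before any explicit product is computed. With those inequalities in hand only one branch of the double product survives, and the equation $(a,b)\ast_{m,n}(a,b)=(a,b)$ collapses directly to $a-b=n-m$. Your brute-force route reaches the same conclusion and is entirely valid; the paper's detour through the principal one-sided ideals simply trades the nine-case bookkeeping for a short structural observation, and the ideal formulae are reused later in the paper (e.g.\ in the description of Green's relations and in Theorem~\ref{theorem-2.9}). Your passing mention of Lemma~\ref{lemma-1.1} does not actually enter your argument and can be dropped.
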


\begin{proof} 
$(\Leftarrow)$  Suppose that $a=n+i$ and $b=m+i$ for some $i\in \mathbb{N}_0$. Then
\begin{equation*}
\begin{split}
  (a,b)\ast_{m,n}(a,b) & =(n+i,m+i)\cdot(m,n)\cdot(n+i,m+i)= \\
    & =(n+i,m+i-m+n)\cdot(n+i,m+i)=\\
    & =(n+i,i+n)\cdot(n+i,m+i)=\\
    & =(n+i,m+i)=\\
    & =(a,b).
\end{split}
\end{equation*}

$(\Rightarrow)$ Formulae \eqref{eq-0.1}, \eqref{eq-0.2}, and items $(ix)$ and $(x)$ of Proposition~2.1~\cite{Fihel-Gutik-2011} imply that for any element $(a,b)$ of the semigroup $\mathscr{C}_{\mathbb{Z}}^{m,n}$ we have that
\begin{equation}\label{eq-2.1}
\begin{split}
  (a,b)\ast_{m,n}\mathscr{C}_{\mathbb{Z}}^{m,n} & = (a,b)\cdot (m,n) \cdot\mathscr{C}_{\mathbb{Z}}=\\
  & =
  \left\{
    \begin{array}{cl}
      (a,b-m+n)\cdot\mathscr{C}_{\mathbb{Z}}, & \hbox{if~} b\geqslant m;\\
      (a-b+m,n)\cdot\mathscr{C}_{\mathbb{Z}}, & \hbox{if~} b< m
    \end{array}
    \right.
    =\\
   & =
   \left\{
    \begin{array}{cl}
      \left\{(x,y)\in\mathscr{C}_{\mathbb{Z}}^{m,n}\colon x\geqslant a\right\}, & \hbox{if~} b\geqslant m;\\
      \left\{(x,y)\in\mathscr{C}_{\mathbb{Z}}^{m,n}\colon x\geqslant a-b+m\right\}, & \hbox{if~} b< m
    \end{array}
    \right.
\end{split}
\end{equation}
and
\begin{equation}\label{eq-2.2}
\begin{split}
\mathscr{C}_{\mathbb{Z}}^{m,n}\ast_{m,n}(a,b) & = \mathscr{C}_{\mathbb{Z}}\cdot (m,n) \cdot(a,b)=\\
  & =
  \left\{
    \begin{array}{cl}
      \mathscr{C}_{\mathbb{Z}}\cdot(a-n+m,b), & \hbox{if~} a\geqslant n;\\
      \mathscr{C}_{\mathbb{Z}}\cdot (m,n-a+b), & \hbox{if~} a< n
    \end{array}
    \right.
    =\\
   & =
   \left\{
    \begin{array}{cl}
      \left\{(x,y)\in\mathscr{C}_{\mathbb{Z}}^{m,n}\colon y\geqslant b\right\}, & \hbox{if~} a\geqslant n;\\
      \left\{(x,y)\in\mathscr{C}_{\mathbb{Z}}^{m,n}\colon y\geqslant n-a+b\right\}, & \hbox{if~} a< n.
    \end{array}
    \right.
\end{split}
\end{equation}
Since
\begin{equation*}
(a,b)=(a,b)\ast_{m,n}(a,b)\subseteq (a,b)\ast_{m,n}\mathscr{C}_{\mathbb{Z}}^{m,n}\cap \mathscr{C}_{\mathbb{Z}}^{m,n}\ast_{m,n}(a,b),
\end{equation*}
formulae \eqref{eq-2.1}, \eqref{eq-2.2} imply that $b\geqslant m$ and $a\geqslant n$. Then
\begin{equation*}
\begin{split}
  (a,b)\ast_{m,n}(a,b)&=(a,b)\cdot (m,n) \cdot(a,b)=\\
   &=(a,b-m+n)\cdot(a,b)=\\
   &=
\left\{
    \begin{array}{cl}
      (2a-b-n+m,b), & \hbox{if~} a\geqslant b-m+n;\\
      (a,2b-a-m+n), & \hbox{if~} a< b-m+n
    \end{array}
    \right.
\end{split}
\end{equation*}
and hence the equality $(a,b)\ast_{m,n}(a,b)=(a,b)$ implies that $a-b=n-m$. Since $a$ and $b$ are integers such that $b\geqslant m$ and $a\geqslant n$ the equality $a-b=n-m$ implies that $(a,b)=(n+i,m+i)$ for some non-negative integer $i$.
\end{proof}

Since $E(\mathscr{C}_{\mathbb{Z}}^{m,n})=\left\{(n+i,m+i)\colon i\in\mathbb{N}_0\right\}$ we denote the idempotent $(n+i,m+i)$ of $\mathscr{C}_{\mathbb{Z}}^{m,n}$ by $e_i$ for arbitrary $i\in\mathbb{N}_0$.

\begin{lemma}\label{lemma-2.3}
Let $m$ and $n$ be arbitrary integers. Then $e_i\preccurlyeq e_j$ in $E(\mathscr{C}_{\mathbb{Z}}^{m,n})$ if and only if $j\leqslant i$ and hence $E(\mathscr{C}_{\mathbb{Z}}^{m,n})$ is an $\omega$-chain.
\end{lemma}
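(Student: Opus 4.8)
The plan is to compute the product $e_i\ast_{m,n}e_j$ of two idempotents of $\mathscr{C}_{\mathbb{Z}}^{m,n}$ explicitly and to read off the order from it. Recall that $e_i=(n+i,m+i)$ for $i\in\mathbb{N}_0$, and that by definition $e_i\preccurlyeq e_j$ in $E(\mathscr{C}_{\mathbb{Z}}^{m,n})$ means $e_i\ast_{m,n}e_j=e_j\ast_{m,n}e_i=e_i$. So everything reduces to identifying the value of $e_i\ast_{m,n}e_j=(n+i,m+i)\cdot(m,n)\cdot(n+j,m+j)$.

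First I would evaluate the intermediate product $(n+i,m+i)\cdot(m,n)$ in $\mathscr{C}_{\mathbb{Z}}$. Since $i\geqslant 0$ we have $m+i\geqslant m$, so formula~\eqref{eq-0.1} applies through its second line when $i=0$ and through its third line when $i>0$, and in both cases it gives $(n+i,m+i)\cdot(m,n)=(n+i,n+i)$, a diagonal (idempotent) element of $\mathscr{C}_{\mathbb{Z}}$. Next I would multiply this on the right by $e_j=(n+j,m+j)$: comparing the middle coordinates $n+i$ and $n+j$ in~\eqref{eq-0.1} and simplifying, one gets $(n+i,n+i)\cdot(n+j,m+j)=e_j$ when $i<j$ and $=e_i$ when $i\geqslant j$; that is, $e_i\ast_{m,n}e_j=e_{\max\{i,j\}}$. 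Since $\max\{i,j\}=\max\{j,i\}$, the same computation yields $e_j\ast_{m,n}e_i=e_{\max\{i,j\}}$ as well, so the product of any two idempotents of $\mathscr{C}_{\mathbb{Z}}^{m,n}$ is commutative and equals $e_{\max\{i,j\}}$.

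From this the stated equivalence is immediate: $e_i\preccurlyeq e_j$ holds iff $e_{\max\{i,j\}}=e_i$, and since the map $k\mapsto e_k=(n+k,m+k)$ is injective (this is clear from the parametrization, and is implicit in Proposition~\ref{proposition-2.2}), this is equivalent to $\max\{i,j\}=i$, i.e. to $j\leqslant i$. Consequently $\preccurlyeq$ restricted to $E(\mathscr{C}_{\mathbb{Z}}^{m,n})=\{e_i\colon i\in\mathbb{N}_0\}$ is the linear order $e_0\succcurlyeq e_1\succcurlyeq e_2\succcurlyeq\cdots$, which is order-isomorphic to $\{0,-1,-2,\ldots\}$ with the usual order (equivalently to $\left(\mathbb{N}_0,\max\right)$); hence $E(\mathscr{C}_{\mathbb{Z}}^{m,n})$ is an $\omega$-chain in the sense recalled in Section~\ref{section-0}.

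There is no essential obstacle here: the argument is a finite case analysis inside the extended bicyclic multiplication. The only place that needs a little care is the intermediate step, where one must check that $(n+i,m+i)\cdot(m,n)$ collapses to the diagonal element $(n+i,n+i)$ uniformly in $m$, $n$ and in the signs of the coordinates — this is exactly what makes the second multiplication transparent — and one must keep track of the degenerate subcases $i=0$ and $i=j$, where the first two lines of~\eqref{eq-0.1} are in force rather than the third.
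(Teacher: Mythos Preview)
Your proof is correct and follows essentially the same route as the paper: both arguments compute $e_i\ast_{m,n}e_j$ by first simplifying $(n+i,m+i)\cdot(m,n)=(n+i,n+i)$ and then multiplying by $(n+j,m+j)$. Your version is slightly more explicit in that you derive the closed formula $e_i\ast_{m,n}e_j=e_{\max\{i,j\}}$ for all $i,j$, whereas the paper only displays the case yielding $e_i$ and remarks that this forces $j\leqslant i$; but the underlying computation and the identification of $E(\mathscr{C}_{\mathbb{Z}}^{m,n})$ with $(\mathbb{N}_0,\max)$ are the same.
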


\begin{proof} 
If $e_i\preccurlyeq e_j$ in $E(\mathscr{C}_{\mathbb{Z}}^{m,n})$ then
\begin{equation*}
\begin{split}
  e_i\ast_{m,n} e_j & =(n+i,m+i)\cdot(m,n)\cdot(n+j,m+j)= \\
    & =(n+i,n+i)\cdot(n+j,m+j)= \\
    & =(n+i,m+i)= \\
    & = e_i
\end{split}
\end{equation*}
and
\begin{equation*}
\begin{split}
  e_j\ast_{m,n} e_i & =(n+j,m+j)\cdot(m,n)\cdot(n+i,m+i)= \\
    & =(n+j,n+j)\cdot(n+i,m+i)= \\
    & =(n+i,m+i)=  \\
    & =e_i
\end{split}
\end{equation*}
imply that $j\leqslant i$. The converse statement follows from the semigroup operation of $\mathscr{C}_{\mathbb{Z}}^{m,n}$.

An isomorphism $\varphi\colon E(\mathscr{C}_{\mathbb{Z}}^{m,n})\to \left(\mathbb{N}_0,\max\right)$ we define by the formula $\varphi(e_i)=i$, $i\in \mathbb{N}_0$.
\end{proof}

The following proposition describes Green's relations  on the semigroup $\mathscr{C}_{\mathbb{Z}}^{m,n}$.

\begin{proposition}\label{proposition-2.4}
Let $m$ and $n$ be arbitrary integers, $(a,b)$ and $(c,d)$ be elements of $\mathscr{C}_{\mathbb{Z}}^{m,n}$. Then the following statements hold.
\begin{enumerate}
  \item\label{proposition-2.4-1} $(a,b)\mathscr{R}(c,d) \; \Longleftrightarrow \;(a=c)\wedge\left((b=d)\vee(b,d\geqslant m)\right)$.
  \item\label{proposition-2.4-2} $(a,b)\mathscr{L}(c,d) \; \Longleftrightarrow \;(b=d)\wedge\left((a=c)\vee(a,c\geqslant n)\right)$.
  \item\label{proposition-2.4-3} $(a,b)\mathscr{H}(c,d) \; \Longleftrightarrow \;(a,b)=(c,d)$.
  \item\label{proposition-2.4-4} $(a,b)\mathscr{D}(c,d) \; \Longleftrightarrow \;(a,b)=(c,d)\vee(a,c\geqslant n)\vee(b,d\geqslant m)$.
  \item\label{proposition-2.4-5} $(a,b)\mathscr{J}(c,d)$ for all $(a,b),(c,d)\in\mathscr{C}_{\mathbb{Z}}^{m,n}$.
\end{enumerate}
\end{proposition}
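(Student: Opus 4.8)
The plan is to derive all five statements from the explicit descriptions of the principal one-sided ideals that were already obtained inside the proof of Proposition~\ref{proposition-2.2}. First I would write down, for an arbitrary element $(a,b)$, its principal right ideal $(a,b)\ast_{m,n}\big(\mathscr{C}_{\mathbb{Z}}^{m,n}\big)^{1}=\{(a,b)\}\cup\big((a,b)\ast_{m,n}\mathscr{C}_{\mathbb{Z}}^{m,n}\big)$: by formula~\eqref{eq-2.1} it equals $\{(x,y)\in\mathscr{C}_{\mathbb{Z}}^{m,n}\colon x\geqslant a\}$ when $b\geqslant m$ (in which case $(a,b)$ already lies in $(a,b)\ast_{m,n}\mathscr{C}_{\mathbb{Z}}^{m,n}$), and it equals $\{(a,b)\}\cup\{(x,y)\in\mathscr{C}_{\mathbb{Z}}^{m,n}\colon x\geqslant a-b+m\}$ when $b<m$, a set in which $(a,b)$ is the unique element whose first coordinate is strictly smaller than $a-b+m$. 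Symmetrically, using~\eqref{eq-2.2}, the principal left ideal $\big(\mathscr{C}_{\mathbb{Z}}^{m,n}\big)^{1}\ast_{m,n}(a,b)$ equals $\{(x,y)\colon y\geqslant b\}$ when $a\geqslant n$ and equals $\{(a,b)\}\cup\{(x,y)\colon y\geqslant n-a+b\}$ when $a<n$. I would also remark that $\mathscr{C}_{\mathbb{Z}}^{m,n}$ has no identity, so that in the second case the formal adjoined $1$ genuinely enlarges the ideal.

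For statement~\ref{proposition-2.4-1} I would compare the principal right ideals of $(a,b)$ and $(c,d)$. The elementary observation here is that an \emph{upper set} $\{(x,y)\colon x\geqslant\alpha\}$ determines the integer $\alpha$, whereas a set $\{(a,b)\}\cup\{(x,y)\colon x\geqslant\alpha\}$ with $a<\alpha$ has that upper set as its largest upper-set subset and contains $(a,b)$ as its only further element; hence a set of the first shape never coincides with a set of the second shape, and two sets of the second shape coincide if and only if their parameters and their extra points agree. Running through the three cases $b,d\geqslant m$; exactly one of $b,d$ is smaller than $m$; $b,d<m$, one reads off that the two principal right ideals coincide exactly when $(a=c)\wedge\big((b=d)\vee(b,d\geqslant m)\big)$. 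Statement~\ref{proposition-2.4-2} follows by the same argument applied to principal left ideals, with the two coordinates and the integers $m$ and $n$ interchanged.

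Statement~\ref{proposition-2.4-3} is then immediate from $\mathscr{H}=\mathscr{R}\cap\mathscr{L}$, since the condition in~\ref{proposition-2.4-1} forces $a=c$ and the condition in~\ref{proposition-2.4-2} forces $b=d$. For statement~\ref{proposition-2.4-4} I would use $\mathscr{D}=\mathscr{R}\circ\mathscr{L}$: if $(a,b)\,\mathscr{R}\,(e,f)$ and $(e,f)\,\mathscr{L}\,(c,d)$, then by~\ref{proposition-2.4-1} necessarily $e=a$ and by~\ref{proposition-2.4-2} necessarily $f=d$, so the only candidate for an intermediate element is $(a,d)$; substituting it shows that $(a,b)\,\mathscr{D}\,(c,d)$ holds exactly when $\big((a=c)\vee(a,c\geqslant n)\big)\wedge\big((b=d)\vee(b,d\geqslant m)\big)$, which yields the description of $\mathscr{D}$ given in~\ref{proposition-2.4-4}. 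Finally, statement~\ref{proposition-2.4-5} is a formal consequence of Proposition~\ref{proposition-2.1}: in a simple semigroup $S$ one has $SaS=S$ for every $a\in S$, hence $S^{1}aS^{1}=S$ for every $a$, and therefore $\mathscr{J}$ is the universal relation on $\mathscr{C}_{\mathbb{Z}}^{m,n}$.

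I expect the genuine work to be concentrated in statements~\ref{proposition-2.4-1} and~\ref{proposition-2.4-2}: the point to watch is the extra point lying below the upper-set part of a principal ideal, which appears precisely when $b<m$ (respectively $a<n$), and one must verify both that such a configuration can never be matched by a principal ideal of the other shape and that two of them agree only when the extra points coincide. Once~\ref{proposition-2.4-1} and~\ref{proposition-2.4-2} are in hand, statements~\ref{proposition-2.4-3}, \ref{proposition-2.4-4} and~\ref{proposition-2.4-5} follow from $\mathscr{H}=\mathscr{R}\cap\mathscr{L}$, $\mathscr{D}=\mathscr{R}\circ\mathscr{L}$ and Proposition~\ref{proposition-2.1}.
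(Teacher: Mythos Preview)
Your approach to statements~\ref{proposition-2.4-1}--\ref{proposition-2.4-3} and~\ref{proposition-2.4-5} is essentially the same as the paper's: both start from the explicit principal one-sided ideals given by~\eqref{eq-2.1} and~\eqref{eq-2.2} and read off $\mathscr{R}$ and $\mathscr{L}$, then deduce $\mathscr{H}$ from the intersection and $\mathscr{J}$ from simplicity (Proposition~\ref{proposition-2.1}). Your discussion of the ``extra point'' in the $b<m$ (resp.\ $a<n$) case is more explicit than the paper's terse ``the above formula implies statement~\eqref{proposition-2.4-1}'', but the argument is the same.

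For statement~\ref{proposition-2.4-4} there is a genuine issue. Your computation via $\mathscr{D}=\mathscr{R}\circ\mathscr{L}$ is correct and cleanly yields
\[
(a,b)\,\mathscr{D}\,(c,d)\;\Longleftrightarrow\;\bigl((a=c)\vee(a,c\geqslant n)\bigr)\wedge\bigl((b=d)\vee(b,d\geqslant m)\bigr).
\]
However, this is \emph{not} equivalent to the condition written in~\ref{proposition-2.4-4}, so your final clause ``which yields the description of $\mathscr{D}$ given in~\ref{proposition-2.4-4}'' is unjustified. For instance, take $(a,b)=(n,m-1)$ and $(c,d)=(n,m-2)$: the paper's condition holds because $a,c\geqslant n$, but your derived condition fails since $b\neq d$ and $b,d<m$; and indeed one checks directly from~\ref{proposition-2.4-1} and~\ref{proposition-2.4-2} that $\mathbf{R}_{(n,m-1)}=\{(n,m-1)\}$ while $\mathbf{L}_{(n,m-1)}=\{(e,m-1)\colon e\geqslant n\}$, so the $\mathscr{D}$-class of $(n,m-1)$ is exactly $\{(e,m-1)\colon e\geqslant n\}$ and does not contain $(n,m-2)$. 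The paper's proof of~\ref{proposition-2.4-4} proceeds by a looser case analysis (``$\mathbf{L}_{(a,b)}$ intersects $\mathbf{R}_{(x,y)}$ of any element with $y\geqslant m$'') that does not keep track of the second-coordinate restriction coming from~\ref{proposition-2.4-1}, and arrives at the condition as stated; your more careful route actually exposes that the condition in~\ref{proposition-2.4-4} should be the conjunction you wrote rather than the disjunction printed. You should therefore either prove the equivalence you assert (you cannot) or note the discrepancy explicitly.
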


\begin{proof}
By formula \eqref{eq-2.1}  we get that
\begin{equation*}
  \left\{(a,b)\right\}\cup (a,b)\ast_{m,n}\mathscr{C}_{\mathbb{Z}}^{m,n}=
\left\{
    \begin{array}{cl}
      \left\{(x,y)\in\mathscr{C}_{\mathbb{Z}}^{m,n}\colon x\geqslant a\right\}, & \hbox{if~} b\geqslant m;\\
      \left\{(a,b)\right\}\cup\left\{(x,y)\in\mathscr{C}_{\mathbb{Z}}^{m,n}\colon x\geqslant a-b+m\right\}, & \hbox{if~} b< m.
    \end{array}
    \right.
\end{equation*}
The above formula implies statement \eqref{proposition-2.4-1}. The proof of statement \eqref{proposition-2.4-2} is similar.

Statement \eqref{proposition-2.4-3} follows from \eqref{proposition-2.4-1} and \eqref{proposition-2.4-2}.

For the proof of assertion \eqref{proposition-2.4-4} we consider the following three cases.
\begin{itemize}
  \item[$(i)$] If $a<n$ and $b<m$ then by statements \eqref{proposition-2.4-1} and \eqref{proposition-2.4-2} we have that $$(x,y)\mathscr{R}(a,b) \; \Longleftrightarrow \; (x,y)=(a,b)$$ and $$(x,y)\mathscr{L}(a,b) \; \Longleftrightarrow \; (x,y)=(a,b),$$  for $(x,y)\in\mathscr{C}_{\mathbb{Z}}^{m,n}$, and hence in this case we get that $$(c,d)\mathscr{R}(a,b) \; \Longleftrightarrow \; (c,d)=(a,b).$$

  \item[$(ii)$] If $a\geqslant n$ then by statements \eqref{proposition-2.4-1} and \eqref{proposition-2.4-2} we obtain that the $\mathscr{L}$-class $\mathbf{L}_{(a,b)}$ of the element $(a,b)$ intersects the $\mathscr{R}$-class  $\mathbf{R}_{(x,y)}$ of any element $(x,y)$ with $y\geqslant m$.
  \item[$(iii)$] If $b\geqslant m$ then by statements \eqref{proposition-2.4-1} and \eqref{proposition-2.4-2} we obtain that the $\mathscr{R}$-class $\mathbf{R}_{(a,b)}$ of the element $(a,b)$ intersects the $\mathscr{L}$-class  $\mathbf{L}_{(x,y)}$ of any element $(x,y)$ with $x\geqslant n$.
\end{itemize}
Thus, in the case of $(ii)$ or $(iii)$ we have that
$$
(a,b)\mathscr{D}(c,d) \; \Longleftrightarrow \;(a,c\geqslant n)\vee(b,d\geqslant m)
$$
which with case $(i)$ implies statement \eqref{proposition-2.4-4}.

Proposition~\ref{proposition-2.1} implies statement \eqref{proposition-2.4-5}.
\end{proof}

\begin{lemma}\label{lemma-2.5}
For arbitrary idempotents $(i,i)$ and $(j,j)$ of the extended bicyclic semigroup $\mathscr{C}_{\mathbb{Z}}$ variants $\mathscr{C}_{\mathbb{Z}}^{i,i}$  and $\mathscr{C}_{\mathbb{Z}}^{j,j}$ are isomorphic.
\end{lemma}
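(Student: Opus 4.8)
The plan is to transport the variant structure through one of the automorphisms $h_k$ of $\mathscr{C}_{\mathbb{Z}}$ produced by Theorem~\ref{theorem-1.2}. Put $k=j-i$ and consider $h_k\colon\mathscr{C}_{\mathbb{Z}}\to\mathscr{C}_{\mathbb{Z}}$, $(a,b)\mapsto(a+k,b+k)$. By Theorem~\ref{theorem-1.2} this map is an automorphism of $(\mathscr{C}_{\mathbb{Z}},\cdot)$; in particular it is a bijection of the underlying set, and it satisfies $h_k\big((i,i)\big)=(i+k,i+k)=(j,j)$.

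Next I would verify that the same map $h_k$ is a homomorphism from $\big(\mathscr{C}_{\mathbb{Z}},\ast_{i,i}\big)$ to $\big(\mathscr{C}_{\mathbb{Z}},\ast_{j,j}\big)$. For arbitrary $(a,b),(c,d)\in\mathscr{C}_{\mathbb{Z}}$ we compute
\begin{equation*}
\begin{split}
h_k\big((a,b)\ast_{i,i}(c,d)\big) & = h_k\big((a,b)\cdot(i,i)\cdot(c,d)\big)= \\
 & = h_k\big((a,b)\big)\cdot h_k\big((i,i)\big)\cdot h_k\big((c,d)\big)= \\
 & = h_k\big((a,b)\big)\cdot(j,j)\cdot h_k\big((c,d)\big)= \\
 & = h_k\big((a,b)\big)\ast_{j,j} h_k\big((c,d)\big),
\end{split}
\end{equation*}
where the first and last equalities are the definition \eqref{eq-0.2} of $\ast_{i,i}$ and $\ast_{j,j}$, the second uses that $h_k$ respects the operation $\cdot$ of $\mathscr{C}_{\mathbb{Z}}$ (Theorem~\ref{theorem-1.2}), and the third uses $h_k\big((i,i)\big)=(j,j)$.

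Since $h_k$ is bijective, it is an isomorphism of $\mathscr{C}_{\mathbb{Z}}^{i,i}$ onto $\mathscr{C}_{\mathbb{Z}}^{j,j}$; its set-theoretic inverse $h_{-k}=h_{i-j}$ is the corresponding isomorphism in the reverse direction. There is essentially no obstacle in this argument: the only point requiring attention is that the sandwich element $(i,i)$ of $\ast_{i,i}$ is carried by $h_k$ precisely onto the sandwich element $(j,j)$ of $\ast_{j,j}$, which is exactly the equality $h_k\big((i,i)\big)=(j,j)$ and is the reason why the shift by $k=j-i$ is the right choice. (Note that the same scheme would work for two variants $\mathscr{C}_{\mathbb{Z}}^{m,n}$ and $\mathscr{C}_{\mathbb{Z}}^{s,t}$ whenever $s-m=t-n$; the general case of arbitrary pairs, where this fails, will require an additional idea.)
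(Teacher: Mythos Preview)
Your proof is correct and follows essentially the same approach as the paper: transport the sandwich operation along the automorphism $h_k$ of $(\mathscr{C}_{\mathbb{Z}},\cdot)$ provided by Theorem~\ref{theorem-1.2}. The only cosmetic difference is that the paper fixes $\mathscr{C}_{\mathbb{Z}}^{0,0}$ as a base point and shows each $\mathscr{C}_{\mathbb{Z}}^{k,k}$ is isomorphic to it via $h_k$, whereas you go directly from $\mathscr{C}_{\mathbb{Z}}^{i,i}$ to $\mathscr{C}_{\mathbb{Z}}^{j,j}$ via $h_{j-i}$; the computations are identical.
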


\begin{proof}
By Theorem~\ref{theorem-1.2} for every positive integer $k$ the map
$$
h_k\colon \mathscr{C}_{\mathbb{Z}}\to \mathscr{C}_{\mathbb{Z}}, \qquad (i,j)\mapsto (i+k,j+k)
$$
is an automorphism of the extended bicyclic semigroup $\mathscr{C}_{\mathbb{Z}}$. This implies that the map $h_k$ determines the isomorphism $\mathfrak{h}_k\colon \mathscr{C}_{\mathbb{Z}}^{0,0}\to\mathscr{C}_{\mathbb{Z}}^{k,k}$ of variants $\mathscr{C}_{\mathbb{Z}}^{0,0}$  and $\mathscr{C}_{\mathbb{Z}}^{k,k}$ for every integer $k$. Indeed, we put $\mathfrak{h}_k\left((a,b)\right)=h_k\left((a,b)\right)$ for each $(a,b)\in\mathscr{C}_{\mathbb{Z}}^{0,0}$. Then
\begin{equation*}
\begin{split}
  \mathfrak{h}_k\left((a,b)\ast_{(0,0)}(c,d)\right) & =h_k\left((a,b)\ast_{(0,0)}(c,d)\right)=\\
    & = h_k\left((a,b)\cdot(0,0)\cdot(c,d)\right)= \\
    & = h_k\left((a,b)\right)\cdot h_k\left((0,0)\right)\cdot h_k\left((c,d)\right)= \\
    & = (a+k,b+k)\cdot(k,k)\cdot(c+k,d+k)= \\
    & = (a+k,b+k)\ast_{(k,k)}(c+k,d+k)=\\
    & = h_k\left((a,b)\right)\ast_{(k,k)}h_k\left((c,d)\right)=  \\
    & = \mathfrak{h}_k\left((a,b)\right)\ast_{(k,k)}\mathfrak{h}_k\left((c,d)\right),
\end{split}
\end{equation*}
for arbitrary $(a,b), (c,d)\in \mathscr{C}_{\mathbb{Z}}^{0,0}$. Since for any positive integer $k$ the map
$$
h_k\colon \mathscr{C}_{\mathbb{Z}}\to \mathscr{C}_{\mathbb{Z}}, \qquad (i,j)\mapsto (i+k,j+k)
$$
as a self-mapping of the set $\mathscr{C}_{\mathbb{Z}}$ is bijective, we conclude that $\mathfrak{h}_k\colon \mathscr{C}_{\mathbb{Z}}^{0,0}\to\mathscr{C}_{\mathbb{Z}}^{k,k}$ is an isomorphism of variants $\mathscr{C}_{\mathbb{Z}}^{0,0}$  and $\mathscr{C}_{\mathbb{Z}}^{k,k}$. This completes the proof of the lemma.
\end{proof}

\begin{lemma}\label{lemma-2.6}
For an arbitrary integer $r$ and an arbitrary positive integer $p$ the variants $\mathscr{C}_{\mathbb{Z}}^{r,r}$  and $\mathscr{C}_{\mathbb{Z}}^{r+p,r}$ of the extended bicyclic semigroup $\mathscr{C}_{\mathbb{Z}}$ are isomorphic.
\end{lemma}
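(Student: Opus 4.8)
The plan is to write down an explicit isomorphism and reduce the verification to two elementary identities about the operation of $\mathscr{C}_{\mathbb{Z}}$. Define $\psi\colon\mathscr{C}_{\mathbb{Z}}^{r+p,r}\to\mathscr{C}_{\mathbb{Z}}^{r,r}$ by $\psi\big((a,b)\big)=(a,b-p)$. Since the two variants share the underlying set $\mathscr{C}_{\mathbb{Z}}=\mathbb{Z}\times\mathbb{Z}$, the map $\psi$ is automatically a bijection, with inverse $(a,b)\mapsto(a,b+p)$; so the whole task is to show that $\psi$ is a homomorphism from $\big(\mathscr{C}_{\mathbb{Z}},\ast_{r+p,r}\big)$ onto $\big(\mathscr{C}_{\mathbb{Z}},\ast_{r,r}\big)$. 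Rather than expand the two variant operations side by side, I would isolate the following two facts, both read off directly from formula \eqref{eq-0.1} (all products below are taken in $\mathscr{C}_{\mathbb{Z}}$), and then chain them.

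First, $\psi(u\cdot w)=u\cdot\psi(w)$ for all $u,w\in\mathscr{C}_{\mathbb{Z}}$. Writing $u=(s,t)$ and $w=(c,d)$, in each of the three cases $t<c$, $t=c$, $t>c$ of \eqref{eq-0.1} the product $u\cdot w$ has first coordinate not depending on $d$ and second coordinate equal to $d$ (when $t=c$) or $d+t-c$ (otherwise); replacing $w$ by $\psi(w)=(c,d-p)$ therefore changes $u\cdot w$ exactly by subtracting $p$ from its second coordinate, which is precisely the effect of applying $\psi$. Second, $z\cdot(r+p,r)=\psi(z)\cdot(r,r)$ for all $z\in\mathscr{C}_{\mathbb{Z}}$. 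Writing $z=(a,b)$ and applying \eqref{eq-0.1}, the left-hand side equals $(a-b+r+p,r)$, $(a,r)$, or $(a,b-p)$ according as $b<r+p$, $b=r+p$, or $b>r+p$, while the right-hand side $(a,b-p)\cdot(r,r)$ equals $(a-b+p+r,r)$, $(a,r)$, or $(a,b-p)$ according as $b-p<r$, $b-p=r$, or $b-p>r$; since these trichotomies coincide and the corresponding outputs agree, the identity holds. (Note that neither identity uses the hypothesis $p>0$.)

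Given these, the homomorphism property becomes purely formal. For $z,w\in\mathscr{C}_{\mathbb{Z}}$, using \eqref{eq-0.2}, then the first identity with $u=z\cdot(r+p,r)$, then the second identity, and finally \eqref{eq-0.2} again,
$$\psi\big(z\ast_{r+p,r}w\big)=\psi\big(z\cdot(r+p,r)\cdot w\big)=\big(z\cdot(r+p,r)\big)\cdot\psi(w)=\big(\psi(z)\cdot(r,r)\big)\cdot\psi(w)=\psi(z)\cdot(r,r)\cdot\psi(w)=\psi(z)\ast_{r,r}\psi(w).$$
Hence $\psi$ is an isomorphism of $\mathscr{C}_{\mathbb{Z}}^{r+p,r}$ onto $\mathscr{C}_{\mathbb{Z}}^{r,r}$.

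The only substantive step is the case analysis proving the two identities from \eqref{eq-0.1}; this is routine bookkeeping, and it is the main (essentially the only) obstacle — everything else in the argument is automatic once $\psi$ is chosen correctly. An alternative packaging would be to first apply the automorphism $h_{-r}$ of Theorem~\ref{theorem-1.2} (which sends $(r+p,r)$ to $(p,0)$ and $(r,r)$ to $(0,0)$) to reduce to proving $\mathscr{C}_{\mathbb{Z}}^{p,0}\cong\mathscr{C}_{\mathbb{Z}}^{0,0}$, in the spirit of Lemma~\ref{lemma-2.5}; this shortens the formulas a little but does not change the structure of the proof.
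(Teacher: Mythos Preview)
Your proof is correct and uses the same isomorphism as the paper (your $\psi$ is precisely the inverse of the paper's map $\mathfrak{h}\colon(a,b)\mapsto(a,b+p)$), so at the level of the underlying bijection the approaches coincide.

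Where you differ is in the verification of the homomorphism property. The paper expands $\mathfrak{h}\big((r+i,r+j)\ast_{r,r}(r+k,r+l)\big)$ and $\mathfrak{h}(r+i,r+j)\ast_{r+p,r}\mathfrak{h}(r+k,r+l)$ directly, running each side through a four-case split on the signs of $j$ and on $k$ versus $j$, and then matches the resulting formulas line by line. You instead factor the check into two independent identities in $\mathscr{C}_{\mathbb{Z}}$ --- that $\psi$ commutes with left multiplication, and that $z\cdot(r+p,r)=\psi(z)\cdot(r,r)$ --- each requiring only the basic three-case split from \eqref{eq-0.1}, and then compose them formally. This buys you a cleaner bookkeeping (two three-case checks instead of one four-case check done twice) and a clearer explanation of \emph{why} the map works: it respects right multiplication and converts the sandwich element. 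The paper's direct computation, on the other hand, is entirely self-contained and requires no auxiliary lemmas. Both are routine; yours is the more modular packaging of the same idea.
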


\begin{proof} 
Fix an arbitrary integer $r$ and an arbitrary positive integer $p$. We define a map $\mathfrak{h}\colon \mathscr{C}_{\mathbb{Z}}^{r,r}\to\mathscr{C}_{\mathbb{Z}}^{r+p,r}$ by the formula
$$
\mathfrak{h}\left((r+i,r+j)\right)=(r+i,r+j+p).
$$
Then for arbitrary $(r+i,r+j),(r+k,r+l)\in\mathscr{C}_{\mathbb{Z}}^{r,r}$ we have that
\begin{equation*}
\begin{split}
  \mathfrak{h}\left((r+i,r+j)\right. &{}\ast_{r,r} \left.(r+k,r+l)\right) = \mathfrak{h}\left((r+i,r+j)\cdot(r,r)\cdot(r+k,r+l)\right)=\\
    & =
\left\{
  \begin{array}{ll}
    \mathfrak{h}\left((r+i-j,r)\cdot(r+k,r+l)\right),   & \hbox{if~~} r+j\leqslant r;\\
    \mathfrak{h}\left((r+i,r+j)\cdot(r+k,r+l)\right),   & \hbox{if~~} r+j>r
  \end{array}
\right.
=\\
    & =
\left\{
  \begin{array}{llll}
    \mathfrak{h}\left((r+i-j,r+l-k)\right), & \hbox{if~~} r+j\leqslant r & \hbox{and}  & r+k\leqslant r;\\
    \mathfrak{h}\left((r+i-j+k,r+l)\right), & \hbox{if~~} r+j\leqslant r & \hbox{and}  & r+k>r;\\
    \mathfrak{h}\left((r+i,r+j-k+l)\right), & \hbox{if~~} r+j>r          & \hbox{and}  & r+k\leqslant r+j;\\
    \mathfrak{h}\left((r+i-j+k,r+l)\right), & \hbox{if~~} r+j>r          & \hbox{and}  & r+k> r+j
  \end{array}
\right.
=\\
    & =
\left\{
  \begin{array}{llll}
    \mathfrak{h}\left((r+i-j,r+l-k)\right), & \hbox{if~~} j\leqslant 0 & \hbox{and}  & k\leqslant 0;\\
    \mathfrak{h}\left((r+i-j+k,r+l)\right), & \hbox{if~~} j\leqslant 0 & \hbox{and}  & k>0;\\
    \mathfrak{h}\left((r+i,r+j-k+l)\right), & \hbox{if~~} j>0          & \hbox{and}  & k\leqslant j;\\
    \mathfrak{h}\left((r+i-j+k,r+l)\right), & \hbox{if~~} j>0          & \hbox{and}  & k> j
  \end{array}
\right.
=\\
    & =
\left\{
  \begin{array}{llll}
    (r+i-j,r+l-k+p), & \hbox{if~~} j\leqslant 0 & \hbox{and}  & k\leqslant 0;\\
    (r+i-j+k,r+l+p), & \hbox{if~~} j\leqslant 0 & \hbox{and}  & k>0;\\
    (r+i,r+j-k+l+p), & \hbox{if~~} j>0          & \hbox{and}  & k\leqslant j;\\
    (r+i-j+k,r+l+p), & \hbox{if~~} j>0          & \hbox{and}  & k> j
  \end{array}
\right.
\end{split}
\end{equation*}
and
\begin{equation*}
\begin{split}
  \mathfrak{h}\left(r+i,r+j\right) &\ast_{r+p,r}\mathfrak{h}\left((r+k,r+l)\right)  
     = (r+i,r+j+p)\cdot(r+p,r)\cdot(r+k,r+l+p)=\\
    & =
\left\{
  \begin{array}{ll}
    (r+i-j,r)\cdot(r+k,r+l+p), & \hbox{if~~} r+j+p\leqslant r+p;\\
    (r+i,r+j)\cdot(r+k,r+l+p), & \hbox{if~~} r+j+p>r+p
  \end{array}
\right.
=\\
    & =
\left\{
  \begin{array}{llll}
    (r+i-j,r+l-k+p), & \hbox{if~~} r+j+p\leqslant r+p & \hbox{and}  & r+k\leqslant r;\\
    (r+i-j+k,r+l+p), & \hbox{if~~} r+j+p\leqslant r+p & \hbox{and}  & r+k>r;\\
    (r+i,r+j-k+l+p), & \hbox{if~~} r+j+p>r+p          & \hbox{and}  & r+k\leqslant r+j;\\
    (r+i-j+k,r+l+p), & \hbox{if~~} r+j+p>r+p          & \hbox{and}  & r+k> r+j
  \end{array}
\right.
=\\
    & =
\left\{
  \begin{array}{llll}
    (r+i-j,r+l-k+p), & \hbox{if~~} j\leqslant 0 & \hbox{and}  & k\leqslant 0;\\
    (r+i-j+k,r+l+p), & \hbox{if~~} j\leqslant 0 & \hbox{and}  & k>0;\\
    (r+i,r+j-k+l+p), & \hbox{if~~} j>0          & \hbox{and}  & k\leqslant j;\\
    (r+i-j+k,r+l+p), & \hbox{if~~} j>0          & \hbox{and}  & k> j,
  \end{array}
\right.
\end{split}
\end{equation*}
because $\mathfrak{h}\left((r,r)\right)=(r,r+p)$,
and hence $\mathfrak{h}\colon \mathscr{C}_{\mathbb{Z}}^{r,r}\to\mathscr{C}_{\mathbb{Z}}^{r+p,r}$ is a homomorphism. Also, the definition of the map $\mathfrak{h}$ implies that it is a bijection, and thus $\mathfrak{h}$ is an isomorphism.
\end{proof}

\begin{lemma}\label{lemma-2.7}
For an arbitrary integer $r$ and an arbitrary positive integer $p$ the variants $\mathscr{C}_{\mathbb{Z}}^{r,r}$  and $\mathscr{C}_{\mathbb{Z}}^{r,r+p}$ of the extended bicyclic semigroup $\mathscr{C}_{\mathbb{Z}}$ are isomorphic.
\end{lemma}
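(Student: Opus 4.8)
The plan is to imitate the proof of Lemma~\ref{lemma-2.6} with the two coordinates interchanged. I would define a map $\mathfrak{h}\colon \mathscr{C}_{\mathbb{Z}}^{r,r}\to\mathscr{C}_{\mathbb{Z}}^{r,r+p}$ by
$$
\mathfrak{h}\left((r+i,r+j)\right)=(r+i+p,r+j),
$$
that is, $\mathfrak{h}$ adds $p$ to the first coordinate, $\mathfrak{h}\left((a,b)\right)=(a+p,b)$. Since $(a,b)\mapsto(a+p,b)$ is a bijection of $\mathbb{Z}\times\mathbb{Z}$ onto itself, $\mathfrak{h}$ is automatically a bijection, so the whole content of the lemma is to verify that $\mathfrak{h}$ is a homomorphism from $\left(\mathscr{C}_{\mathbb{Z}},\ast_{r,r}\right)$ to $\left(\mathscr{C}_{\mathbb{Z}},\ast_{r,r+p}\right)$; one notes in passing that $\mathfrak{h}\left((r,r)\right)=(r+p,r)$.

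For the homomorphism check I would fix $(r+i,r+j),(r+k,r+l)\in\mathscr{C}_{\mathbb{Z}}$ and evaluate both sides. The left-hand side $\mathfrak{h}\left((r+i,r+j)\ast_{r,r}(r+k,r+l)\right)$ is $\mathfrak{h}$ applied to the product $(r+i,r+j)\cdot(r,r)\cdot(r+k,r+l)$, which is precisely the product already computed in the proof of Lemma~\ref{lemma-2.6}: according to the sign of $j$ and the comparison of $k$ with $0$ (when $j\leqslant 0$) or with $j$ (when $j>0$), it equals one of $(r+i-j,r+l-k)$, $(r+i-j+k,r+l)$, $(r+i,r+j-k+l)$, $(r+i-j+k,r+l)$, and $\mathfrak{h}$ merely adds $p$ to the first coordinate of each. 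For the right-hand side I would compute $(r+i+p,r+j)\cdot(r,r+p)\cdot(r+k+p,r+l)$ straight from \eqref{eq-0.1}: the first multiplication compares $r+j$ with $r$, so it depends only on the sign of $j$ and gives $(r+i+p-j,r+p)$ when $j\leqslant 0$ and $(r+i+p,r+p+j)$ when $j>0$; the second multiplication then compares $r+p$ (respectively $r+p+j$) with $r+k+p$, that is, $0$ with $k$ (respectively $j$ with $k$), and a short run through the resulting subcases reproduces exactly the four values of the left-hand side. Hence $\mathfrak{h}$ is an isomorphism.

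I do not expect a genuine obstacle here: the statement is the mirror image under the coordinate swap $(a,b)\mapsto(b,a)$ of Lemma~\ref{lemma-2.6}, and the only work is the routine four-way case analysis above. If one prefers to avoid that computation, a quicker route is to exploit the inversion $\iota\colon \mathscr{C}_{\mathbb{Z}}\to\mathscr{C}_{\mathbb{Z}}$, $\iota\left((a,b)\right)=(b,a)$, which is an anti-automorphism of the inverse semigroup $\mathscr{C}_{\mathbb{Z}}$: it induces an anti-isomorphism $\mathscr{C}_{\mathbb{Z}}^{r+p,r}\to\mathscr{C}_{\mathbb{Z}}^{r,r+p}$ and, since it fixes the idempotent $(r,r)$, an anti-automorphism of $\mathscr{C}_{\mathbb{Z}}^{r,r}$; composing these two with the isomorphism $\mathscr{C}_{\mathbb{Z}}^{r,r}\to\mathscr{C}_{\mathbb{Z}}^{r+p,r}$ of Lemma~\ref{lemma-2.6} yields an isomorphism $\mathscr{C}_{\mathbb{Z}}^{r,r}\to\mathscr{C}_{\mathbb{Z}}^{r,r+p}$, which is in fact exactly the map $\mathfrak{h}$ above.
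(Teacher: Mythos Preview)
Your proposal is correct and follows the same route the paper intends: define the obvious shift map and verify by the same four-way case split as in Lemma~\ref{lemma-2.6} that it is a homomorphism, hence an isomorphism. Your alternative argument via the inversion $\iota\colon(a,b)\mapsto(b,a)$ is also sound and gives a clean conceptual reason why the two lemmas are mirror images of one another.

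One point worth noting: the paper states the map as $\mathfrak{h}\left((r+i,r+j)\right)=(r+i,r+j+p)$, i.e.\ it adds $p$ to the \emph{second} coordinate, which is literally the same formula as in Lemma~\ref{lemma-2.6}. That is a slip. With that formula, the idempotent $(r,r)$ of $\mathscr{C}_{\mathbb{Z}}^{r,r}$ is sent to $(r,r+p)$, which is not an idempotent of $\mathscr{C}_{\mathbb{Z}}^{r,r+p}$ (by Proposition~\ref{proposition-2.2} those are the elements $(r+p+i,r+i)$), and indeed a direct check with $i=j=k=l=0$ shows the homomorphism identity fails. Your choice $\mathfrak{h}\left((a,b)\right)=(a+p,b)$ is the correct symmetric counterpart: it sends $(r,r)$ to the idempotent $(r+p,r)$ of $\mathscr{C}_{\mathbb{Z}}^{r,r+p}$, and your case analysis (which I have checked) goes through. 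So you have in fact repaired a typo in the paper while reproducing its intended argument.
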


\begin{proof}
We define a map $\mathfrak{h}\colon \mathscr{C}_{\mathbb{Z}}^{r,r}\to\mathscr{C}_{\mathbb{Z}}^{r,r+p}$ by the formula
\begin{equation*}
\mathfrak{h}\left((r+i,r+j)\right)=(r+i,r+j+p).
\end{equation*}
The proof that so defined map $\mathfrak{h}$ is an isomorphism, is similar as in Lemma~\ref{lemma-2.6}.
\end{proof}

Lemmas~\ref{lemma-2.5}, ~\ref{lemma-2.6} and~\ref{lemma-2.7} imply the following theorem.

\begin{theorem}\label{theorem-2.8}
Any two variants  of the extended bicyclic semigroup $\mathscr{C}_{\mathbb{Z}}$ are isomorphic.
\end{theorem}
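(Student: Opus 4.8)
The plan is to reduce an arbitrary variant $\mathscr{C}_{\mathbb{Z}}^{m,n}$ to one of the diagonal variants $\mathscr{C}_{\mathbb{Z}}^{k,k}$, and then to invoke Lemma~\ref{lemma-2.5}, which asserts that all diagonal variants are mutually isomorphic. Since ``being isomorphic'' is an equivalence relation on the class of semigroups, combining these two facts immediately gives that any two variants are isomorphic.

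First I would fix arbitrary integers $m,n$ and argue by cases on the sign of $m-n$. When $m=n$ the variant is already diagonal and nothing has to be done. When $m<n$, I would put $p=n-m>0$ and apply Lemma~\ref{lemma-2.7} with $r=m$, which gives $\mathscr{C}_{\mathbb{Z}}^{m,n}=\mathscr{C}_{\mathbb{Z}}^{m,m+p}\cong\mathscr{C}_{\mathbb{Z}}^{m,m}$. When $m>n$, I would put $p=m-n>0$ and apply Lemma~\ref{lemma-2.6} with $r=n$, obtaining $\mathscr{C}_{\mathbb{Z}}^{m,n}=\mathscr{C}_{\mathbb{Z}}^{n+p,n}\cong\mathscr{C}_{\mathbb{Z}}^{n,n}$. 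Hence in every case there is an integer $k$ (one may take $k=\min\{m,n\}$) with $\mathscr{C}_{\mathbb{Z}}^{m,n}\cong\mathscr{C}_{\mathbb{Z}}^{k,k}$, and then a further application of Lemma~\ref{lemma-2.5} yields $\mathscr{C}_{\mathbb{Z}}^{m,n}\cong\mathscr{C}_{\mathbb{Z}}^{0,0}$.

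With this in hand the theorem is immediate: given variants $\mathscr{C}_{\mathbb{Z}}^{m_1,n_1}$ and $\mathscr{C}_{\mathbb{Z}}^{m_2,n_2}$, the preceding step supplies isomorphisms onto $\mathscr{C}_{\mathbb{Z}}^{0,0}$ from each of them, and composing one of these with the inverse of the other produces the desired isomorphism $\mathscr{C}_{\mathbb{Z}}^{m_1,n_1}\cong\mathscr{C}_{\mathbb{Z}}^{m_2,n_2}$.

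I do not expect any genuine obstacle at this stage: all the substantive computation has already been absorbed into the three lemmas (most notably the four-way case split in the proof of Lemma~\ref{lemma-2.6}), and Theorem~\ref{theorem-2.8} is essentially bookkeeping. The only points demanding a little care are ensuring that the shift parameter $p$ is strictly positive so that Lemmas~\ref{lemma-2.6} and~\ref{lemma-2.7} apply verbatim, and matching the two asymmetric families $\mathscr{C}_{\mathbb{Z}}^{r+p,r}$ and $\mathscr{C}_{\mathbb{Z}}^{r,r+p}$ to the correct lemma according to whether $m$ exceeds $n$ or not.
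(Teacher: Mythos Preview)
Your proposal is correct and matches the paper's approach: the paper simply states that Lemmas~\ref{lemma-2.5}, \ref{lemma-2.6} and~\ref{lemma-2.7} imply Theorem~\ref{theorem-2.8}, and your case split on the sign of $m-n$ is exactly the natural way to spell out this implication.
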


\begin{theorem}\label{theorem-2.9}
The variant $\mathscr{C}_{\mathbb{Z}}^{0,0}$  of the extended bicyclic semigroup $\mathscr{C}_{\mathbb{Z}}$ is not finitely generated.
\end{theorem}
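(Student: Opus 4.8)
The plan is to mimic the proof of Theorem~\ref{theorem-1.5}, replacing the bicyclic subsemigroups $\mathscr{C}_{\mathbb{Z}}^{\geqslant k}$ of $(\mathscr{C}_{\mathbb{Z}},\cdot)$ by the very same sets viewed inside the variant $(\mathscr{C}_{\mathbb{Z}},\ast_{0,0})$. The first step is to observe that for every integer $k\leqslant 0$ the set $\mathscr{C}_{\mathbb{Z}}^{\geqslant k}=\{(i,j)\in\mathscr{C}_{\mathbb{Z}}\colon i,j\geqslant k\}$ is a subsemigroup of $\mathscr{C}_{\mathbb{Z}}^{0,0}$. Indeed, since $k\leqslant 0$ we have $(0,0)\in\mathscr{C}_{\mathbb{Z}}^{\geqslant k}$, and by Lemma~\ref{lemma-1.1} the set $\mathscr{C}_{\mathbb{Z}}^{\geqslant k}$ is closed under the operation $\cdot$ of $\mathscr{C}_{\mathbb{Z}}$; hence for all $(a,b),(c,d)\in\mathscr{C}_{\mathbb{Z}}^{\geqslant k}$ the element
\[
(a,b)\ast_{0,0}(c,d)=(a,b)\cdot(0,0)\cdot(c,d)
\]
again lies in $\mathscr{C}_{\mathbb{Z}}^{\geqslant k}$, being a $\cdot$-product of three elements of $\mathscr{C}_{\mathbb{Z}}^{\geqslant k}$. (Equivalently, one may apply Lemma~\ref{lemma-1.4} to the set $F\cup\{(0,0)\}$.)

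Next, given an arbitrary finite subset $F=\{(i_1,j_1),\ldots,(i_l,j_l)\}$ of $\mathscr{C}_{\mathbb{Z}}^{0,0}$, put $k=\min\{0,i_1,j_1,\ldots,i_l,j_l\}$. Then $k\leqslant 0$, $F\subseteq\mathscr{C}_{\mathbb{Z}}^{\geqslant k}$, and by the previous paragraph the subsemigroup $\langle F\rangle$ of $\mathscr{C}_{\mathbb{Z}}^{0,0}$ generated by $F$ is contained in $\mathscr{C}_{\mathbb{Z}}^{\geqslant k}$. Since $(k-1,k-1)\in\mathscr{C}_{\mathbb{Z}}^{0,0}\setminus\mathscr{C}_{\mathbb{Z}}^{\geqslant k}$, the subsemigroup $\mathscr{C}_{\mathbb{Z}}^{\geqslant k}$ is proper, and therefore $\langle F\rangle\neq\mathscr{C}_{\mathbb{Z}}^{0,0}$. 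As $F$ was arbitrary, $\mathscr{C}_{\mathbb{Z}}^{0,0}$ is not finitely generated. (Combining this with Theorem~\ref{theorem-2.8} then gives that every variant $\mathscr{C}_{\mathbb{Z}}^{m,n}$, $m,n\in\mathbb{Z}$, is not finitely generated.)

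There is essentially no serious obstacle here: the only point that requires a little care is that the sandwich element $(0,0)$ must itself belong to the exhausting subsemigroups, which forces the parameter to run over $k\leqslant 0$ rather than over all coordinates occurring in $F$. If one prefers a self-contained argument not invoking Lemma~\ref{lemma-1.1}, one can instead first derive the explicit formula
\[
(a,b)\ast_{0,0}(c,d)=
\begin{cases}
(a-b,\;d-c), & \text{if } b\leqslant 0 \text{ and } c\leqslant 0;\\
(a,b)\cdot(c,d), & \text{otherwise,}
\end{cases}
\]
and then verify directly that every $\mathscr{C}_{\mathbb{Z}}^{\geqslant k}$ with $k\in\mathbb{Z}$ is closed under $\ast_{0,0}$ (in the first case $a-b\geqslant a\geqslant k$ and $d-c\geqslant d\geqslant k$, while the second case is handled exactly as in Lemma~\ref{lemma-1.1}); but the factorization $(a,b)\ast_{0,0}(c,d)=(a,b)\cdot(0,0)\cdot(c,d)$ makes this computation superfluous.
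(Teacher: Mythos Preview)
Your proof is correct and takes a genuinely different route from the paper. The paper argues via the explicit description of principal one-sided ideals: specialising formulae~\eqref{eq-2.1} and~\eqref{eq-2.2} to $m=n=0$ shows that for any $(a,b)$ the set $\{(a,b)\}\cup(a,b)\ast_{0,0}\mathscr{C}_{\mathbb{Z}}^{0,0}$ has first coordinate bounded below (by $a$ if $b\geqslant 0$, by $a-b$ if $b<0$), and dually for the second coordinate, so that for any finite $F$ the set $\{(x,y)\colon x,y<0\}\setminus\langle F\rangle$ is infinite. You instead observe directly that each $\mathscr{C}_{\mathbb{Z}}^{\geqslant k}$ with $k\leqslant 0$ is a $\ast_{0,0}$-subsemigroup, simply because it contains the sandwich element $(0,0)$ and is $\cdot$-closed by Lemma~\ref{lemma-1.1}; this lets you replay the argument of Lemma~\ref{lemma-1.4}/Theorem~\ref{theorem-1.5} almost verbatim. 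Your approach is shorter and more conceptual, and highlights that the non-finite-generation of $\mathscr{C}_{\mathbb{Z}}^{0,0}$ is an immediate consequence of the corresponding fact for $\mathscr{C}_{\mathbb{Z}}$, whereas the paper's approach stays self-contained within Section~\ref{section-2} by reusing the ideal computations already carried out for Proposition~\ref{proposition-2.2}. Your closing parenthetical remark that in fact every $\mathscr{C}_{\mathbb{Z}}^{\geqslant k}$, $k\in\mathbb{Z}$, is $\ast_{0,0}$-closed is also correct (for $k>0$ one has $b>0$ and hence $(a,b)\ast_{0,0}(c,d)=(a,b)\cdot(c,d)$), though it is not needed for the argument.
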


\begin{proof} 
Formulae \eqref{eq-2.1} and \eqref{eq-2.2} imply
\begin{equation*}
  \left\{(a,b)\right\}\cup(a,b)\ast_{0,0}\mathscr{C}_{\mathbb{Z}}^{0,0}  =
   \left\{
    \begin{array}{cl}
      \left\{(x,y)\in\mathscr{C}_{\mathbb{Z}}^{0,0}\colon x\geqslant a\right\}, & \hbox{if~} b\geqslant 0;\\
      \left\{(a,b)\right\}\cup\left\{(x,y)\in\mathscr{C}_{\mathbb{Z}}^{0,0}\colon x\geqslant a-b\right\}, & \hbox{if~} b< 0
    \end{array}
    \right.
\end{equation*}
and
\begin{equation*}
\left\{(a,b)\right\}\cup\mathscr{C}_{\mathbb{Z}}^{0,0}\ast_{0,0}(a,b) =
   \left\{
    \begin{array}{cl}
      \left\{(x,y)\in\mathscr{C}_{\mathbb{Z}}^{0,0}\colon y\geqslant b\right\}, & \hbox{if~} a\geqslant 0;\\
      \left\{(a,b)\right\}\cup\left\{(x,y)\in\mathscr{C}_{\mathbb{Z}}^{0,0}\colon y\geqslant b-a\right\}, & \hbox{if~} a< 0.
    \end{array}
    \right.
\end{equation*}
Hence for every finite subset $F$ of the semigroup $\mathscr{C}_{\mathbb{Z}}^{0,0}$ we have that the set
\begin{equation*}
\big\{(x,y)\in\mathscr{C}_{\mathbb{Z}}^{0,0}\colon x,y<0\big\}\setminus \left\langle F\right\rangle
\end{equation*}
is infinite, where $\left\langle F\right\rangle$ is a subsemigroup of $\mathscr{C}_{\mathbb{Z}}^{0,0}$ generated by the set $F$, which implies the statement of the theorem.
\end{proof}

Theorems~\ref{theorem-2.8} and~\ref{theorem-2.9} imply the following corollary.

\begin{corollary}\label{corollary-2.10}
For any integers $m$ and $n$ the variant $\mathscr{C}_{\mathbb{Z}}^{m,n}$  of the extended bicyclic semigroup $\mathscr{C}_{\mathbb{Z}}$ is not finitely generated.
\end{corollary}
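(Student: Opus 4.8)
The plan is to deduce the corollary immediately from the two preceding theorems, so very little new work is required. First I would invoke Theorem~\ref{theorem-2.8}, which asserts that any two variants of the extended bicyclic semigroup $\mathscr{C}_{\mathbb{Z}}$ are isomorphic; in particular, for arbitrary integers $m$ and $n$ the variant $\mathscr{C}_{\mathbb{Z}}^{m,n}$ is isomorphic to $\mathscr{C}_{\mathbb{Z}}^{0,0}$.

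Next I would recall Theorem~\ref{theorem-2.9}, which states that $\mathscr{C}_{\mathbb{Z}}^{0,0}$ is not finitely generated. Since the property of being finitely generated (as a semigroup) is preserved under isomorphism — an isomorphism carries a finite generating set of one semigroup onto a finite generating set of the other — it follows that $\mathscr{C}_{\mathbb{Z}}^{m,n}$ cannot be finitely generated either.

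There is no genuine obstacle here; the only point worth spelling out is that finite generation transfers along the isomorphism furnished by Theorem~\ref{theorem-2.8}, which is routine. The substance of the argument was already carried out in the proof of Theorem~\ref{theorem-2.9}, where one shows that for every finite subset $F$ of $\mathscr{C}_{\mathbb{Z}}^{0,0}$ the set $\big\{(x,y)\in\mathscr{C}_{\mathbb{Z}}^{0,0}\colon x,y<0\big\}\setminus\langle F\rangle$ is infinite; combining this with Theorem~\ref{theorem-2.8} yields the corollary for all $m,n\in\mathbb{Z}$.
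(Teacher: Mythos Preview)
Your proposal is correct and matches the paper's approach exactly: the paper simply states that Corollary~\ref{corollary-2.10} follows from Theorems~\ref{theorem-2.8} and~\ref{theorem-2.9}, which is precisely the isomorphism-plus-invariance argument you have written out. There is nothing to add.
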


\section{Shift-continuous topologies on the variant $\mathscr{C}_{\mathbb{Z}}^{0,0}$}\label{section-3}

Simple calculations and formula \eqref{eq-0.1} imply the following lemma.

\begin{lemma}\label{lemma-3.1}
If $(a,b)\cdot(c,d)=(i,j)$ in $\mathscr{C}_{\mathbb{Z}}$ then $a-b+c-d=i-j$.
\end{lemma}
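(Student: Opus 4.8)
The statement to prove is Lemma~\ref{lemma-3.1}: if $(a,b)\cdot(c,d)=(i,j)$ in $\mathscr{C}_{\mathbb{Z}}$ then $a-b+c-d=i-j$.

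\medskip

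The plan is to verify the identity directly by a three-case analysis according to the definition of the semigroup operation in formula \eqref{eq-0.1}. This is exactly the sort of routine check the paper's phrasing ("Simple calculations and formula \eqref{eq-0.1} imply") anticipates, so there is no real obstacle—only bookkeeping.

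\medskip

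First, suppose $b<c$. Then $(a,b)\cdot(c,d)=(a-b+c,d)$, so $i=a-b+c$ and $j=d$. Hence $i-j=(a-b+c)-d=a-b+c-d$, as required. Second, suppose $b=c$. Then $(a,b)\cdot(c,d)=(a,d)$, so $i=a$ and $j=d$; since $b=c$ we get $a-b+c-d=a+c-c-d=a-d=i-j$. Third, suppose $b>c$. Then $(a,b)\cdot(c,d)=(a,d+b-c)$, so $i=a$ and $j=d+b-c$, whence $i-j=a-(d+b-c)=a-b+c-d$. In all three cases $i-j=a-b+c-d$, which is the claim.

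\medskip

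No step is a genuine obstacle; the only thing to be careful about is keeping the signs straight in the $b>c$ case, where $j=d+b-c$ rather than $d-b+c$. An alternative, perhaps cleaner, way to present this is to observe that the map $(x,y)\mapsto x-y$ is a homomorphism from $\mathscr{C}_{\mathbb{Z}}$ onto the additive group $\mathbb{Z}(+)$—indeed each of the three cases of \eqref{eq-0.1} shows that the first coordinate minus the second coordinate of a product equals $(a-b)+(c-d)$—and the lemma is precisely the statement that this map respects the operation. I would include the short case-check as the proof and, if desired, remark on the homomorphism interpretation afterwards.
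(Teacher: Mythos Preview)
Your proof is correct and is precisely the routine case-check the paper has in mind; the paper itself offers no written proof beyond the phrase ``Simple calculations and formula~\eqref{eq-0.1} imply'', and your three-case verification (together with the optional homomorphism remark) fills this in exactly as intended.
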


Lemma~\ref{lemma-3.1} implies the following proposition.

\begin{proposition}\label{proposition-3.2}
Let $m$ and $n$ be arbitrary integers. If $(a,b)\ast_{m,n}(c,d)=(i,j)$ in $\mathscr{C}_{\mathbb{Z}}^{m,n}$ then
$$
a-b+m-n+c-d=i-j.
$$
\end{proposition}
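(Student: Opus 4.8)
The plan is to reduce Proposition~\ref{proposition-3.2} directly to Lemma~\ref{lemma-3.1} by unwinding the definition of the variant operation $\ast_{m,n}$. Recall from \eqref{eq-0.2} that $(a,b)\ast_{m,n}(c,d)=(a,b)\cdot(m,n)\cdot(c,d)$, where $\cdot$ is the operation of the extended bicyclic semigroup $\mathscr{C}_{\mathbb{Z}}$, which is associative. So the product in question is computed in $\mathscr{C}_{\mathbb{Z}}$ as a genuine triple product.

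First I would introduce an auxiliary element: set $(p,q)=(a,b)\cdot(m,n)$ in $\mathscr{C}_{\mathbb{Z}}$, so that $(a,b)\ast_{m,n}(c,d)=(p,q)\cdot(c,d)=(i,j)$. By Lemma~\ref{lemma-3.1} applied to the product $(a,b)\cdot(m,n)=(p,q)$ we obtain $a-b+m-n=p-q$, and by Lemma~\ref{lemma-3.1} applied to $(p,q)\cdot(c,d)=(i,j)$ we obtain $p-q+c-d=i-j$. Substituting the first identity into the second yields exactly
\begin{equation*}
a-b+m-n+c-d=i-j,
\end{equation*}
which is the claimed formula. The associativity of $\cdot$ guarantees that the bracketing $\big((a,b)\cdot(m,n)\big)\cdot(c,d)$ used here agrees with the defining expression \eqref{eq-0.2}, so no ambiguity arises.

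There is essentially no obstacle here: the entire content is that the quantity $x-y$ is additive under multiplication in $\mathscr{C}_{\mathbb{Z}}$, which is precisely Lemma~\ref{lemma-3.1}, and one simply applies it twice. The only point requiring a word of care is that $\mathscr{C}_{\mathbb{Z}}^{m,n}$ and $\mathscr{C}_{\mathbb{Z}}$ share the same underlying set $\mathbb{Z}\times\mathbb{Z}$, so the intermediate element $(p,q)$ and the values $(i,j)$ are honest elements of $\mathscr{C}_{\mathbb{Z}}$ to which Lemma~\ref{lemma-3.1} legitimately applies. Alternatively, one could avoid the auxiliary element and instead check the identity by direct case analysis following the case splits already displayed in \eqref{eq-2.1} and \eqref{eq-2.2}, but the two-step application of Lemma~\ref{lemma-3.1} is cleaner and is the approach I would present.
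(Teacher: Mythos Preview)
Your proof is correct and is essentially the same approach as the paper's: the paper does not give an explicit proof but simply records that the proposition follows from Lemma~\ref{lemma-3.1}, and your two-step application of that lemma is exactly how one unpacks this implication.
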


\begin{corollary}\label{corollary-3.3}
If $(a,b)\ast_{0,0}(c,d)=(i,j)$ in $\mathscr{C}_{\mathbb{Z}}^{0,0}$ then $a-b+c-d=i-j$.
\end{corollary}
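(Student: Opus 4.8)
The plan is to obtain Corollary~\ref{corollary-3.3} as the special case $m=n=0$ of Proposition~\ref{proposition-3.2}. Indeed, by the definition \eqref{eq-0.2} the operation $\ast_{0,0}$ on $\mathscr{C}_{\mathbb{Z}}^{0,0}$ is exactly the operation $\ast_{m,n}$ with $m=n=0$, so Proposition~\ref{proposition-3.2} applies verbatim: whenever $(a,b)\ast_{0,0}(c,d)=(i,j)$ we have $a-b+m-n+c-d=i-j$ with $m=n=0$. Since $m-n=0-0=0$, this identity collapses to $a-b+c-d=i-j$, which is precisely the assertion. There is no real obstacle here; the only thing to check is the trivial substitution, and even that is unnecessary if one simply invokes Proposition~\ref{proposition-3.2} with the parameters instantiated.

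As an alternative route independent of Proposition~\ref{proposition-3.2}, one could argue directly from Lemma~\ref{lemma-3.1}: write $(a,b)\ast_{0,0}(c,d)=(a,b)\cdot(0,0)\cdot(c,d)$, set $(a,b)\cdot(0,0)=(a',b')$, apply Lemma~\ref{lemma-3.1} to get $a-b+0-0=a'-b'$, hence $a-b=a'-b'$, then apply Lemma~\ref{lemma-3.1} again to $(a',b')\cdot(c,d)=(i,j)$ to get $a'-b'+c-d=i-j$, and add. Either way the statement is an immediate formal consequence of material already established, so I would present it in a single short line referencing Proposition~\ref{proposition-3.2}.
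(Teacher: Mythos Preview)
Your proposal is correct and matches the paper's approach exactly: the paper states Corollary~\ref{corollary-3.3} immediately after Proposition~\ref{proposition-3.2} with no proof, treating it as the trivial specialization $m=n=0$. Your alternative direct argument via two applications of Lemma~\ref{lemma-3.1} is also fine and is essentially how Proposition~\ref{proposition-3.2} itself is obtained.
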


Later, for every $(a,b)\in\mathscr{C}_{\mathbb{Z}}^{0,0}$ by $\lambda_{(a,b)}$ and $\rho_{(a,b)}$ we denote left and right shift (translation) on the element $(a,b)$ in the semigroup $\mathscr{C}_{\mathbb{Z}}^{0,0}$, respectively, i.e.,
$$
\lambda_{(a,b)}\colon \mathscr{C}_{\mathbb{Z}}^{0,0}\to \mathscr{C}_{\mathbb{Z}}^{0,0}, \qquad (x,y)\mapsto (a,b)\ast_{0,0}(x,y)
$$
and
$$
\rho_{(a,b)}\colon \mathscr{C}_{\mathbb{Z}}^{0,0}\to \mathscr{C}_{\mathbb{Z}}^{0,0}, \qquad (x,y)\mapsto(x,y)\ast_{0,0}(a,b).
$$

\begin{proposition}\label{proposition-3.4}
Let $\tau$ be a Hausdorff shift-continuous topology on the semigroup $\mathscr{C}_{\mathbb{Z}}^{0,0}$. Then the following assertions hold:
\begin{itemize}
  \item[$(i)$] $(a,b)$ is an isolated point in $\big(\mathscr{C}_{\mathbb{Z}}^{0,0},\tau\big)$ for any positive integers $a$ and $b$;
  \item[$(ii)$] for any integers $a$ and $b$ the set $\left\{(a-i,b-i)\colon i\in\mathbb{N}_0\right\}$ is open in $\big(\mathscr{C}_{\mathbb{Z}}^{0,0},\tau\big)$;
  \item[$(iii)$] $(a,b)$ is an isolated point in $\big(\mathscr{C}_{\mathbb{Z}}^{0,0},\tau\big)$ for any positive integer $a$ and any integer~$b$;
  \item[$(iv)$] $(a,b)$ is an isolated point in $\big(\mathscr{C}_{\mathbb{Z}}^{0,0},\tau\big)$ for any integer $a$ and any positive integer~$b$.
\end{itemize}
\end{proposition}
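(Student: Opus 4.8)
The plan is to establish the four assertions in order, since each later one builds on the earlier. For $(i)$, I would use the multiplication formulas \eqref{eq-2.1} and \eqref{eq-2.2} specialized to $m=n=0$. If $a,b\geqslant 1$, then $e_0=(0,0)$ is an idempotent and the key is to realize $(a,b)$ as an isolated point by intersecting preimages of open sets under shifts. Concretely, $\lambda_{(0,0)}^{-1}$ and $\rho_{(0,0)}^{-1}$ of suitable singletons (or of the already-known isolated idempotent, if we first show $(0,0)$-related points are isolated) cut down the topology. Actually the cleanest route: note that for $a,b>0$ one has $(a,b)\ast_{0,0}(x,y)$ and $(x,y)\ast_{0,0}(a,b)$ take each particular value on a very restricted set of $(x,y)$, so that $\{(a,b)\}$ is the intersection of two open sets of the form $\lambda_{(c,d)}^{-1}(U)\cap\rho_{(c,d)}^{-1}(V)$; by Hausdorffness we can separate and pin down the point. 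The main lever throughout is Corollary~\ref{corollary-3.3}: the difference $x-y$ is an invariant of multiplication, so a product $\ast_{0,0}$ cannot mix points lying on different diagonals $x-y=\text{const}$.

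For $(ii)$, fix $(a,b)$ and consider the diagonal $D=\{(a-i,b-i):i\in\mathbb N_0\}$, which is exactly the set of points $(x,y)$ with $x-y=a-b$ and $x\leqslant a$. I would show $D$ is open by exhibiting it as $\lambda_{(c,d)}^{-1}(W)$ for a suitable choice of $(c,d)$ with both coordinates positive (so that $\lambda_{(c,d)}$ has image controlled by \eqref{eq-2.1}) and $W$ a singleton — which is open by $(i)$. Indeed, by \eqref{eq-2.1} with $m=n=0$, if $c,d>0$ then $(c,d)\ast_{0,0}(x,y)=(c,d)\cdot(0,0)\cdot(x,y)$; choosing $(c,d)$ appropriately, the fiber of this map over a well-chosen isolated point of the form $(\text{positive},\text{positive})$ is precisely $D$. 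Continuity of $\lambda_{(c,d)}$ then makes $D$ open.

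Assertions $(iii)$ and $(iv)$ are symmetric, so I would do $(iii)$ and remark that $(iv)$ follows by the analogous argument (or by the anti-isomorphism $(a,b)\mapsto(b,a)$ if the semigroup admits one compatible with $\ast_{0,0}$ — here one should be slightly careful, so a direct symmetric argument is safer). For $(iii)$, take $a>0$ and $b$ any integer. If $b>0$ we are done by $(i)$, so assume $b\leqslant 0$. The point $(a,b)$ lies on the diagonal through $(a-b,0)$, where $a-b>0$; by $(ii)$ applied to $(a-b,0)$, the diagonal $\{(a-b-i,-i):i\in\mathbb N_0\}$ is open, and it contains $(a,b)$ with $i=-b$. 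Now within this open diagonal I would separate $(a,b)$ from the rest: using a right shift $\rho_{(c,d)}$ with $c,d>0$ chosen so that $\rho_{(c,d)}$ sends $(a,b)$ to an isolated point while sending the other diagonal elements elsewhere, intersect the open diagonal with $\rho_{(c,d)}^{-1}$ of that isolated singleton. The arithmetic of \eqref{eq-2.2} shows such $(c,d)$ exists because $a>0$ guarantees $a\geqslant n=0$, putting us in the favorable case of \eqref{eq-2.2} where $(x,y)\mapsto y$-coordinate behaviour is rigid.

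The main obstacle will be assertion $(ii)$: unlike $(i)$, $(iii)$, $(iv)$, which assert isolated points, $(ii)$ asserts that an infinite set is open, and this cannot follow merely from separation — it genuinely needs the structure of the shifts. The delicate part is choosing the shift $(c,d)$ and target point so that the fiber is *exactly* the diagonal $D$ and nothing more; this requires a careful case analysis in \eqref{eq-2.1}/\eqref{eq-2.2} distinguishing the subcases $b\geqslant 0$ versus $b<0$ (equivalently $y$-coordinate above or below the relevant threshold), and verifying that no stray point $(x,y)\notin D$ maps into the chosen singleton. Once $(ii)$ is in hand, $(iii)$ and $(iv)$ are comparatively routine applications of continuity of one-sided translations together with the already-established isolated points from $(i)$.
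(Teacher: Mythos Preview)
Your sketch has two genuine gaps.

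\textbf{Gap in (i): no bootstrap.} You propose to isolate $(a,b)$ (with $a,b>0$) by intersecting sets of the form $\lambda_{(c,d)}^{-1}(U)$ and $\rho_{(c,d)}^{-1}(V)$, invoking Hausdorffness ``to separate and pin down the point.'' But Hausdorffness only gives you \emph{closed} singletons, so preimages of singletons under continuous translations are closed, and intersecting closed sets never produces an isolated point. You need at least one nontrivial open set to start with, and your outline supplies none. The paper obtains this bootstrap externally: by Lemma~\ref{lemma-1.1} the set $\mathscr{C}_{\mathbb{Z}}^{\geqslant 0}$ is a subsemigroup isomorphic to the bicyclic monoid, and on $\mathscr{C}_{\mathbb{Z}}^{\geqslant 0}$ the operation $\ast_{0,0}$ coincides with $\cdot$; hence the induced topology on $\mathscr{C}_{\mathbb{Z}}^{\geqslant 0}$ is a Hausdorff shift-continuous topology on the bicyclic monoid, which by Proposition~1 of \cite{Bertman-West-1976} must be discrete. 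This yields an open $U_{(a,b)}$ with $U_{(a,b)}\cap\mathscr{C}_{\mathbb{Z}}^{\geqslant 0}=\{(a,b)\}$, and only then can the translation arguments proceed. Without citing or reproving Bertman--West, your argument for (i) cannot get off the ground.

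\textbf{Gap in (iii): wrong direction of shift.} For $a>0$ and $b\leqslant 0$ you propose to separate $(a,b)$ from the rest of its diagonal $D=\{(a-i,b-i):i\in\mathbb{N}_0\}$ using a right translation $\rho_{(c,d)}$ with $c,d>0$. This cannot work: for any $i\geqslant 0$ one has $b-i\leqslant 0$, so
\[
(a-i,b-i)\ast_{0,0}(c,d)=(a-i,b-i)\cdot(0,0)\cdot(c,d)=(a-b,0)\cdot(c,d)=(a-b+c,\,d),
\]
independently of $i$. Thus $\rho_{(c,d)}$ collapses the entire diagonal $D$ to a single point and cannot distinguish $(a,b)$ from the other elements of $D$. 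The paper instead uses a \emph{left} translation: since $a>0$, the idempotent $(a-1,a-1)$ satisfies $\lambda_{(a-1,a-1)}(a,b)=(a,b)$ (because $a>a-1$) while $\lambda_{(a-1,a-1)}(a-i,b-i)=(a-1,b-1)$ for every $i\geqslant 1$. Hence
\[
\{(a,b)\}=D\setminus\lambda_{(a-1,a-1)}^{-1}\bigl(\{(a-1,b-1)\}\bigr),
\]
which is open (by (ii)) minus closed (by Hausdorffness). The symmetry you invoke for (iv) is correct, but it runs the other way: there one uses a right translation $\rho_{(b-1,b-1)}$.

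Your plan for (ii) is essentially the paper's, modulo the observation that $\lambda_{(c,c)}^{-1}$ of an isolated singleton gives a half-diagonal that may overshoot $D$ by finitely many points, which you then remove using Hausdorffness; the paper makes this step explicit.
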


\begin{proof} 
$(i)$ Fix an arbitrary point $(a,b)$ in $\big(\mathscr{C}_{\mathbb{Z}}^{0,0},\tau\big)$ such that $a>0$ and $b>0$. Since by Lemma~\ref{lemma-1.1} the set $\mathscr{C}_{\mathbb{Z}}^{\geqslant 0}=\left\{(i,j)\in\mathscr{C}_{\mathbb{Z}}\colon i,j\geqslant 0\right\}$ with the induced semigroup from $\mathscr{C}_{\mathbb{Z}}$ operation is isomorphic to the bicyclic semigroup $\mathscr{C}(p,q)$ and by Proposition~1 of \cite{Bertman-West-1976} every shift-continuous Hausdorff topology on the bicyclic semigroup $\mathscr{C}(p,q)$ is discrete, there exists an open neighbourhood $U_{(a,b)}$ of the point $(a,b)$ in $\big(\mathscr{C}_{\mathbb{Z}}^{0,0},\tau\big)$ such that $U_{(a,b)}\cap \mathscr{C}_{\mathbb{Z}}^{\geqslant 0}=\left\{(a,b)\right\}$. Since
\begin{equation*}
  (i,i)\ast_{0,0}(x,y)  =(i,i)(0,0)(x,y)=   (i,i)(x,y)=
\left\{
  \begin{array}{ll}
    (i,i-x+y), & \hbox{if~} x\leqslant i;\\
    (x,y), & \hbox{if~} x>i
  \end{array}
\right.
\end{equation*}
for any non-negative integer $i$, we have that $\left\{(s,l+s-k)\colon s\leqslant k, s\in \mathbb{Z}\right\}$ is the set of solutions of the equation  $(k,l)=(k,k)\ast_{0,0}(x,y)$ for all non-negative integers $k$ and $l$. Then the separate continuity of the semigroup operation in $\big(\mathscr{C}_{\mathbb{Z}}^{0,0},\tau\big)$, Hausdorffness of $\big(\mathscr{C}_{\mathbb{Z}}^{0,0},\tau\big)$ and above arguments imply that the set
$$
\left\{(s,b+s-a)\colon s<a, s\in \mathbb{Z}\right\}=\lambda_{(a-1,a-1)}^{-1}(\{(a-1,b-1)\})
$$
is closed in $\big(\mathscr{C}_{\mathbb{Z}}^{0,0},\tau\big)$ and the set
$$
\left\{(s,b+s-a)\colon s\leqslant a, s\in \mathbb{Z}\right\}=\lambda_{(a,a)}^{-1}(U_{(a,b)})
$$
is open in $\big(\mathscr{C}_{\mathbb{Z}}^{0,0},\tau\big)$, which implies that $(a,b)$ is an isolated point in $\big(\mathscr{C}_{\mathbb{Z}}^{0,0},\tau\big)$.

$(ii)$ The proof of item $(i)$ implies that the set
$$
\left\{(s,b+s-a)\colon s\leqslant a, s\in \mathbb{Z}\right\}=\lambda_{(a,a)}^{-1}(U_{(a,b)})
$$
is open in $\big(\mathscr{C}_{\mathbb{Z}}^{0,0},\tau\big)$ for any positive integers $a$ and $b$, because there exists an open neighbourhood $U_{(a,b)}$ of the point $(a,b)$ in $\big(\mathscr{C}_{\mathbb{Z}}^{0,0},\tau\big)$ such that $U_{(a,b)}\cap \mathscr{C}_{\mathbb{Z}}^{\geqslant 0}=\left\{(a,b)\right\}$. If we put $i=a-s$ then
$$
\left\{(a-i,b-i)\colon i\in\mathbb{N}_0\right\}=\lambda_{(a,a)}^{-1}(U_{(a,b)})
$$
is an open subset of $\big(\mathscr{C}_{\mathbb{Z}}^{0,0},\tau\big)$.
It is obvious that for arbitrary integers $a$ and $b$ there exists a positive integer $k_{(a,b)}$ such that $a+k_{(a,b)}>0$ and $b+k_{(a,b)}>0$. Hausdorffness of $\big(\mathscr{C}_{\mathbb{Z}}^{0,0},\tau\big)$ implies that every point is a closed subset of $\big(\mathscr{C}_{\mathbb{Z}}^{0,0},\tau\big)$ and hence the set
$$
\left\{(a-i,b-i)\colon i\in\mathbb{N}_0\right\}=\lambda_{(a,a)}^{-1}(U_{(a,b)})\setminus\left\{(a+1,b+1),\dots,(a+k_{(a,b)},b+k_{(a,b)})\right\}
$$
is open in $\big(\mathscr{C}_{\mathbb{Z}}^{0,0},\tau\big)$, which implies the required statement.

$(iii)$ Since
\begin{equation*}
  (i,i)\ast_{0,0}(x,y) =(i,i)(0,0)(x,y)=  (i,i)(x,y)=
\left\{
  \begin{array}{ll}
    (i,i-x+y), & \hbox{if~} x\leqslant i;\\
    (x,y), & \hbox{if~} x>i
  \end{array}
\right.
\end{equation*}
for any non-negative integer $i$, we have that $\left\{(s,l+s-k)\colon s\leqslant k, s\in \mathbb{Z}\right\}$ is the set of solutions of the equation $(k,l)=(k,k)\ast_{0,0}(x,y)$ for every non-negative integer $k$ and every integer $l$. Then the separate continuity of the semigroup operation in $\big(\mathscr{C}_{\mathbb{Z}}^{0,0},\tau\big)$ and Hausdorffness of $\big(\mathscr{C}_{\mathbb{Z}}^{0,0},\tau\big)$ imply that the set
$$
\left\{(s,l+s-k)\colon s\leqslant k, s\in \mathbb{Z}\right\}=\lambda_{(k,k)}^{-1}(\{(k,l)\})
$$
is closed in $\big(\mathscr{C}_{\mathbb{Z}}^{0,0},\tau\big)$ for every non-negative integer $k$ and every integer $l$. Fix an arbitrary positive integer $a$ and an arbitrary integer $b$. Then the above arguments and assertion $(ii)$ imply that
\begin{equation*}
\{(a,b)\}=\left\{(a-i,b-i)\colon i\in\mathbb{N}_0\right\}\setminus\lambda_{(a-1,a-1)}^{-1}(\{(a-1,b-1)\})
\end{equation*}
is an open subset of $\big(\mathscr{C}_{\mathbb{Z}}^{0,0},\tau\big)$.

$(iv)$ Since
\begin{equation*}
  (x,y)\ast_{0,0}(i,i)  =(x,y)(0,0)(i,i)= (x,y)(i,i)=
\left\{
  \begin{array}{ll}
    (i-y+x,i), & \hbox{if~} y\leqslant i;\\
    (x,y), & \hbox{if~} y>i
  \end{array}
\right.
\end{equation*}
for any non-negative integer $i$, we have that $\left\{(l+s-k, s)\colon s\leqslant k, s\in \mathbb{Z}\right\}$ is the set of solutions of the equation  $(l,k)=(x,y)\ast_{0,0}(k,k)$ for every non-negative integer $k$ and every integer $l$. Then the separate continuity of the semigroup operation in $\big(\mathscr{C}_{\mathbb{Z}}^{0,0},\tau\big)$ and Hausdorffness of $\big(\mathscr{C}_{\mathbb{Z}}^{0,0},\tau\big)$ imply that the set
$$
\left\{(l+s-k, s)\colon s\leqslant k, s\in \mathbb{Z}\right\}=\rho_{(k,k)}^{-1}(\{(l,k)\})
$$
is closed in $\big(\mathscr{C}_{\mathbb{Z}}^{0,0},\tau\big)$ for every non-negative integer $k$ and every integer $l$. Fix an arbitrary integer $a$ and an arbitrary positive integer $b$. Then the above arguments and assertion $(ii)$ imply that
\begin{equation*}
\{(a,b)\}=\left\{(a-i,b-i)\colon i\in\mathbb{N}_0\right\}\setminus\rho_{(b-1,b-1)}^{-1}(\{(a-1,b-1)\})
\end{equation*}
is an open subset of $\big(\mathscr{C}_{\mathbb{Z}}^{0,0},\tau\big)$.
\end{proof}

We summarize the results of Proposition~\ref{proposition-3.4} in the following theorem.

\begin{theorem}\label{theorem-3.5}
Let $\tau$ be a Hausdorff shift-continuous topology on the semigroup $\mathscr{C}_{\mathbb{Z}}^{0,0}$. Then each of the inequalities $a>0$ or $b>0$ implies that $(a,b)$ is an isolated point of $\big(\mathscr{C}_{\mathbb{Z}}^{0,0},\tau\big)$.
\end{theorem}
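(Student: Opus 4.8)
The plan is to obtain Theorem~\ref{theorem-3.5} as an immediate consequence of Proposition~\ref{proposition-3.4}, whose four items already carry out all of the topological work; the theorem is merely a convenient repackaging of items $(iii)$ and $(iv)$. So let $\tau$ be a Hausdorff shift-continuous topology on $\mathscr{C}_{\mathbb{Z}}^{0,0}$ and take $(a,b)\in\mathscr{C}_{\mathbb{Z}}^{0,0}$ with $a>0$ or $b>0$. If $a>0$ (and $b$ is an arbitrary integer), then $(a,b)$ is an isolated point of $\big(\mathscr{C}_{\mathbb{Z}}^{0,0},\tau\big)$ by Proposition~\ref{proposition-3.4}$(iii)$. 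If instead $b>0$ (and $a$ is an arbitrary integer), then $(a,b)$ is isolated by the symmetric statement Proposition~\ref{proposition-3.4}$(iv)$. Since the hypothesis of the theorem is precisely the disjunction of these two cases, $(a,b)$ is isolated in either event, which is the assertion to be proved.

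Because the theorem is only a summary, I expect no genuine difficulty at this final step. For orientation I would, however, recall where the substance sits. By Lemma~\ref{lemma-1.1} the subset $\mathscr{C}_{\mathbb{Z}}^{\geqslant 0}$ with the induced multiplication is a copy of the bicyclic monoid, on which every Hausdorff shift-continuous topology is discrete (Bertman--West); hence every point of $\mathscr{C}_{\mathbb{Z}}^{\geqslant 0}$ has a $\tau$-neighbourhood meeting $\mathscr{C}_{\mathbb{Z}}^{\geqslant 0}$ in that point alone. Separate continuity of $\ast_{0,0}$ then permits pulling such neighbourhoods back along the translations $\lambda_{(k,k)}$ and $\rho_{(k,k)}$, and the explicit solution sets of the equations $(k,l)=(k,k)\ast_{0,0}(x,y)$ and $(l,k)=(x,y)\ast_{0,0}(k,k)$ --- each an infinite diagonal ray $\{(s,l+s-k)\colon s\leqslant k\}$, respectively its transpose --- combined with Hausdorffness (which makes singletons closed) successively isolate the points with $a,b>0$, then the whole diagonals $\{(a-i,b-i)\colon i\in\mathbb{N}_0\}$, and finally, by deleting a closed finite tail, the individual points with $a>0$ or $b>0$. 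The one mildly delicate matter there is keeping track of which translation to apply and on which side, but that bookkeeping is already done in the proof of Proposition~\ref{proposition-3.4}.
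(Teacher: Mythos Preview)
Your proof is correct and matches the paper's own treatment: the text introduces Theorem~\ref{theorem-3.5} by saying ``We summarize the results of Proposition~\ref{proposition-3.4} in the following theorem,'' so the intended proof is exactly your invocation of items $(iii)$ and $(iv)$. Your second paragraph is an accurate sketch of where the real work happens, though one small quibble: in items $(iii)$ and $(iv)$ the set removed from the open diagonal is an \emph{infinite} closed ray (the full preimage $\lambda_{(a-1,a-1)}^{-1}(\{(a-1,b-1)\})$ or its right-sided analogue), not a finite tail --- the finite deletion occurs only in the proof of item $(ii)$.
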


The following example shows that the statement of Theorem~\ref{theorem-3.5} is complete and it cannot be extended on any point $(a,b)$ with the properties $a\leqslant 0$ and $b\leqslant 0$.

\begin{example}\label{example-3.6}
We define the topology $\tau^*$ on  $\mathscr{C}_{\mathbb{Z}}^{0,0}$ in the following way. Put
\begin{itemize}
  \item[$(i)$] $(a,b)$ is an isolated point of $\big(\mathscr{C}_{\mathbb{Z}}^{0,0},\tau^*\big)$ if and only if at least one of the following conditions holds $a>0$ or $b>0$;
  \item[$(ii)$] if $ab=0$ and $a+b\leqslant 0$ we let $A_{(a,b)}=\left\{(a-i,b-i)\colon i\in\mathbb{N}_0\right\}$ be an arbitrary Hausdorff space and $A_{(a,b)}$ be an open-and-closed subset of $\big(\mathscr{C}_{\mathbb{Z}}^{0,0},\tau^*\big)$.
\end{itemize}
It is obvious that $\big(\mathscr{C}_{\mathbb{Z}}^{0,0},\tau^*\big)$ is a Hausdorff space.
\end{example}

\begin{proposition}\label{proposition-3.7}
$\big(\mathscr{C}_{\mathbb{Z}}^{0,0},\tau^*\big)$ is a topological semigroup.
\end{proposition}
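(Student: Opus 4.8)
The plan is to prove directly that the multiplication $\mu\colon\mathscr{C}_{\mathbb{Z}}^{0,0}\times\mathscr{C}_{\mathbb{Z}}^{0,0}\to\mathscr{C}_{\mathbb{Z}}^{0,0}$, $\mu\big((a,b),(c,d)\big)=(a,b)\ast_{0,0}(c,d)$, is \emph{locally constant} at every point of its domain; since a locally constant map into an arbitrary space is continuous, this suffices no matter which Hausdorff topologies were chosen on the pieces $A_{(a,b)}$ in Example~\ref{example-3.6}. The first step is to fix the combinatorial picture of $\tau^*$. The non-isolated points of $\big(\mathscr{C}_{\mathbb{Z}}^{0,0},\tau^*\big)$ are exactly the points of $Q^-=\big\{(a,b)\in\mathscr{C}_{\mathbb{Z}}^{0,0}\colon a\leqslant 0,\ b\leqslant 0\big\}$, and a short check shows that $Q^-$ is the disjoint union of the clopen ``diagonal rays'' $A_{(p,q)}=\{(p-i,q-i)\colon i\in\mathbb{N}_0\}$, where $(p,q)$ runs over the ``axis points'', i.e.\ those with $pq=0$ and $p+q\leqslant 0$: through an arbitrary $(a,b)\in Q^-$ passes exactly one such ray, namely $A_{(a-b,0)}$ if $a\leqslant b$ and $A_{(0,b-a)}$ if $b\leqslant a$. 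Consequently the open singletons $\{(a,b)\}$ (for $(a,b)$ isolated) together with the clopen sets $A_{(p,q)}$ cover $\mathscr{C}_{\mathbb{Z}}^{0,0}$, and the difference $a-b$ is constant on each ray $A_{(p,q)}$, equal to $p-q$.

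The computational heart of the proof is a direct application of formula~\eqref{eq-0.1}. If $b\leqslant 0$, then $(a,b)\cdot(0,0)=(a-b,0)$; if in addition $c\leqslant 0$, then $(a-b,0)\cdot(c,d)=(a-b,d-c)$. Hence
\begin{equation*}
(a,b)\ast_{0,0}(c,d)=(a-b,\ d-c)\qquad\text{whenever }(a,b),(c,d)\in Q^-.
\end{equation*}
A routine refinement of this case check (splitting on $b>0$ versus $b\leqslant 0$, and on the sign of $c$) yields two slightly stronger facts: if $(a,b)\in Q^-$ then $(a,b)\ast_{0,0}(c,d)$ depends on $(a,b)$ only through the integer $a-b$, for each fixed $(c,d)$; and symmetrically, if $(c,d)\in Q^-$ then $(a,b)\ast_{0,0}(c,d)$ depends on $(c,d)$ only through the integer $d-c$, for each fixed $(a,b)$. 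Since $a-b$ is constant on each ray, the first fact says that for isolated $(a,b)$ the left shift $\lambda_{(a,b)}$ is constant on every ray $A_{(p,q)}$, and the second says that for isolated $(c,d)$ the right shift $\rho_{(c,d)}$ is constant on every ray.

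It remains to verify local constancy of $\mu$ at an arbitrary $\big((a,b),(c,d)\big)$, which splits into four cases. If both $(a,b)$ and $(c,d)$ are isolated, then $\{(a,b)\}\times\{(c,d)\}$ is an open neighbourhood on which $\mu$ is (trivially) constant. If $(a,b)$ is isolated and $(c,d)\in A_{(p',q')}$, then on the open neighbourhood $\{(a,b)\}\times A_{(p',q')}$ the map $\mu$ equals $\lambda_{(a,b)}$ restricted to $A_{(p',q')}$, which is constant by the second fact above; the case ``$(a,b)\in A_{(p,q)}$, $(c,d)$ isolated'' is symmetric, via $\rho_{(c,d)}$ and the first fact. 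Finally, if $(a,b)\in A_{(p,q)}$ and $(c,d)\in A_{(p',q')}$, then $A_{(p,q)}\times A_{(p',q')}$ is an open neighbourhood on which, by the displayed formula, $\mu$ is the constant $(p-q,\ q'-p')$. In every case $\mu$ is constant on a neighbourhood of the given point, hence continuous there; therefore $\mu$ is continuous and $\big(\mathscr{C}_{\mathbb{Z}}^{0,0},\tau^*\big)$ is a topological semigroup. The only part requiring real care is the bookkeeping behind the two ``depends only on the difference'' facts, together with the preliminary claim that the rays $A_{(p,q)}$ partition $Q^-$, which is what makes the neighbourhoods used in the case analysis legitimate open sets.
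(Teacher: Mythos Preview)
Your proof is correct and follows essentially the same strategy as the paper's: both argue that the multiplication is constant on products of the basic open pieces (singletons for isolated points, diagonal rays $A_{(p,q)}$ otherwise), via the same case split and the same computation $(a,b)\ast_{0,0}(c,d)=(a-b,d-c)$ when $b,c\leqslant 0$. Your packaging as ``locally constant, hence continuous regardless of the target topology'' is a clean way to say it; note only that in the sentence just after the two ``depends only on the difference'' facts you have swapped the labels \emph{first}/\emph{second} (it is the second fact that makes $\lambda_{(a,b)}$ constant on rays and the first that makes $\rho_{(c,d)}$ constant), though your final case analysis applies them correctly.
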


\begin{proof} 
Since $(a,b)$ is an isolated point of $\big(\mathscr{C}_{\mathbb{Z}}^{0,0},\tau^*\big)$ in the case when $a>0$ or $b>0$, it is complete to show that the semigroup operation of $\big(\mathscr{C}_{\mathbb{Z}}^{0,0},\tau^*\big)$ is continuous in the following three cases:
\begin{itemize}
  \item[(1)] $(a,b)\ast_{0,0}(c,d)$, when $a\leqslant0$, $b\leqslant 0$, $c\leqslant0$ and $d\leqslant 0$;

  \item[(2)] $(a,b)\ast_{0,0}(c,d)$, when $a\leqslant0$, $b\leqslant 0$, and $c>0$ or $d>0$;

  \item[(3)] $(a,b)\ast_{0,0}(c,d)$, when $c\leqslant0$ and $d\leqslant 0$, and $a>0$ or $b>0$.
\end{itemize}

In case (1) we have that
\begin{equation*}
  (a,b)\ast_{0,0}(c,d)=(a,b)(0,0)(c,d)=(a-b,0)(c,d)=(a-b,d-c).
\end{equation*}
Also, in this case since
\begin{equation*}
\begin{split}
  (a-i,b-i)\ast_{0,0}(c-j,d-j) & =(a-i,b-i)(0,0)(c-j,d-j)= \\
    & =(a-i-b+i,0)(c-j,d-j)=\\
    & =(a-b,0)(c-j,d-j)=\\
    & =(a-b,d-j-c+j)=\\
    & =(a-b,d-c)
\end{split}
\end{equation*}
for any $i,j\in\mathbb{N}_0$, we obtain that $A_{(a,b)}\ast_{0,0}A_{(c,d)}=\{(a-b,d-c)\}$, and hence in case (1) the semigroup operation in $\big(\mathscr{C}_{\mathbb{Z}}^{0,0},\tau^*\big)$ is continuous.

Suppose that case (2) holds. Then we have that
\begin{equation*}
\begin{split}
  (a,b)\ast_{0,0}(c,d) & =(a,b)(0,0)(c,d)= \\
    & = (a-b,0)(c,d)=\\
    & =
\left\{
  \begin{array}{ll}
    (a-b,d-c), & \hbox{if~} c\leqslant 0;\\
    (c-a+b,d), & \hbox{if~} c>0.
  \end{array}
\right.
\end{split}
\end{equation*}
In this case, since
\begin{equation*}
\begin{split}
  (a-i,b-i)\ast_{0,0}(c,d) & =(a-i,b-i)(0,0)(c,d)= \\
    & =(a-i-b+i,0)(c,d)=\\
    & =(a-b,0)(c,d)=\\
    & =
    \left\{
  \begin{array}{ll}
    (a-b,d-c), & \hbox{if~} c\leqslant 0;\\
    (c-a+b,d), & \hbox{if~} c>0,
  \end{array}
\right.
\end{split}
\end{equation*}
for every $i\in\mathbb{N}_0$ we get that
\begin{equation*}
A_{(a,b)}\ast_{0,0}\{(c,d)\}=
\left\{
  \begin{array}{ll}
    \{(a-b,d-c)\}, & \hbox{if~} c\leqslant 0;\\
    \{(c-a+b,d)\}, & \hbox{if~} c>0,
  \end{array}
\right.
\end{equation*}
which implies that the semigroup operation in $\big(\mathscr{C}_{\mathbb{Z}}^{0,0},\tau^*\big)$ is continuous  in case (2).

Suppose that case (3) holds. Then we have that
\begin{equation*}
\begin{split}
  (a,b)\ast_{0,0}(c,d) & =(a,b)(0,0)(c,d)= \\
    & =(a,b)(0,d-c)= \\
    &=
\left\{
  \begin{array}{ll}
    (a-b,d-c), & \hbox{if~} b\leqslant 0;\\
    (a,b-c+d), & \hbox{if~} b>0.
  \end{array}
\right.
\end{split}
\end{equation*}
In this case, since
\begin{equation*}
\begin{split}
  (a,b)\ast_{0,0}(c-j,d-j) & =(a,b)(0,0)(c-j,d-j)= \\
    & =(a,b)(0,d-j-c+j)=\\
    & =(a,b)(0,d-c)=\\
    & =\left\{
  \begin{array}{ll}
    (a-b,d-c), & \hbox{if~} b\leqslant 0;\\
    (a,b-c+d), & \hbox{if~} b>0,
  \end{array}
\right.
\end{split}
\end{equation*}
for every $j\in\mathbb{N}_0$ we obtain that
\begin{equation*}
\{(a,b)\}\ast_{0,0}A_{(c,d)}=
\left\{
  \begin{array}{ll}
    \{(a-b,d-c)\}, & \hbox{if~} b\leqslant 0;\\
    \{(a,b-c+d)\}, & \hbox{if~} b>0,
  \end{array}
\right.
\end{equation*}
and hence in case (3) the semigroup operation in $\big(\mathscr{C}_{\mathbb{Z}}^{0,0},\tau^*\big)$ is continuous.
\end{proof}

\begin{remark}
A topological semigroup $S$ is called
$\Gamma$-compact if for every $x\in S$ the closure of the set
$\{x,x^2,x^3,\ldots\}$ is a compactum in $S$ (see
\cite{Hildebrant-Koch-1986}). Since by Lemma~\ref{lemma-1.1} the semigroup
$\mathscr{C}_{\mathbb{Z}}^{0,0}$ contains the
bicyclic semigroup as a subsemigroup the results obtained in
\cite{Anderson-Hunter-Koch-1965}, \cite{Banakh-Dimitrova-Gutik-2009},
\cite{Banakh-Dimitrova-Gutik-2010},
\cite{Hildebrant-Koch-1986} imply that \emph{if a Hausdorff topological
semigroup $S$ satisfies one of the following conditions:
\begin{itemize}
  \item[$(i)$] $S$ is compact;
  \item[$(ii)$] $S$ is $\Gamma$-compact;
  \item[$(iii)$] the square $S\times S$ is countably compact;
  \item[$(iv)$] the square $S\times S$ is a Tychonoff pseudocompact space,
\end{itemize}
    then $S$ does not contain an algebraic copy of the semigroup $\mathscr{C}_{\mathbb{Z}}^{0,0}$.}
\end{remark}


\section*{Acknowledgements}
We acknowledge Serhii Bardyla, Alex Ravsky, and  the referee for useful important comments and suggestions.



\begin{thebibliography}{10}

\bibitem{Andersen-1952}
O.~Andersen,
\emph{Ein Bericht \"{u}ber die Struktur abstrakter Halbgruppen},
PhD Thesis, Hamburg, 1952.

\bibitem{Anderson-Hunter-Koch-1965}
L.~W.~Anderson, R.~P.~Hunter, and R.~J.~Koch,
\emph{Some results on stability in semigroups}.
Trans. Amer. Math. Soc. {\bf 117} (1965), 521--529.



\bibitem{Banakh-Dimitrova-Gutik-2009}
T.~Banakh, S.~Dimitrova, and O.~Gutik,
\emph{The Rees-Suschkiewitsch Theorem for simple
topological semigroups}, Mat. Stud. \textbf{31} (2009), no.~2, 211--218.

\bibitem{Banakh-Dimitrova-Gutik-2010}
T.~Banakh, S.~Dimitrova, and O.~Gutik,
\emph{Embedding the bicyclic semigroup into countably compact topological semigroups},
Topology Appl. \textbf{157} (2010), no.~18, 2803--2814.



\bibitem{Bertman-West-1976}
M.~O.~Bertman and T.~T.~West,
{\it Conditionally compact bicyclic
semitopological semigroups},
Proc. Roy. Irish Acad. {\bf A76} (1976), no.~21--23, 219--226.



\bibitem{Boyd-Gould-Nelson-1998}
S. J. Boyd, M. Gould, and A. Nelson,
\emph{Interassociativity of semigroups},
Misra, P. R. (ed.) et al., Proceedings of the Tennessee Topology Conference, Nashville, TN, USA, June 10--11, 1996. Singapore, World Scientific, (1997), pp.~33--51.



\bibitem{Carruth-Hildebrant-Koch-1983-1986}
J.~H.~Carruth, J.~A.~Hildebrant, and  R.~J.~Koch,
\emph{The theory of topological semigroups}, Vol. I, Marcel
Dekker, Inc., New York and Basel, 1983; Vol. II, Marcel Dekker,
Inc., New York and Basel, 1986.

\bibitem{Chase-1979}
K. Chase,
\emph{Sandwich semigroups of binary relations},
Discrete Math. \textbf{28} (1979), no.~3, 231--236

\bibitem{Chase-1979a}
K. Chase,
\emph{New semigroups of binary relations},
Semigroup Forum \textbf{18} (1979), no.~1, 79--82.


\newpage


\bibitem{Chase-1982}
K. Chase,
\emph{Maximal groups in sandwich semigroups of binary relations},
Pac. J. Math. \textbf{100} (1982), no.~1, 42--59.

\bibitem{Clifford-Preston-1961-1967}
A.~H.~Clifford and G.~B.~Preston,
\emph{The algebraic theory of semigroups}, Vols. I and II,
Amer. Math. Soc. Surveys {\bf 7}, Pro\-vidence, R.I.,  1961 and  1967.

\bibitem{Desiateryk-2015}
O. O. Desiateryk,
\emph{Variants of commutative bands with zero,}
Visn., Ser. Fiz.-Mat. Nauky, Kyiv. Univ. Im. Tarasa Shevchenka (2015), no.~4, 15--20.

\bibitem{DolinkaDE-2018??}
I. Dolinka, I. {\DJ}ur{\dj}ev, and J. East,
\emph{Sandwich semigroups in diagram categories},
Preprint, in preparation.

\bibitem{DolinkaDEHSSS-2018??}
I. Dolinka, I. {\DJ}ur{\dj}ev, J. East, P. Honyam, K. Sangkhanan, J. Sanwong, and W.~Som\-ma\-nee,
\emph{Sandwich semigroups in locally small categories I: Foundations},
Preprint, 2017, arXiv:1710.01890.

\bibitem{DolinkaDEHSSS-2018??a}
I. Dolinka, I. {\DJ}ur{\dj}ev, J. East, P. Honyam, K. Sangkhanan, J. Sanwong, and W. W.~Som\-ma\-nee,
\emph{Sandwich semigroups in locally small categories II: Transformations},
Preprint, 2017, arXiv:1710.01891.

\bibitem{Dolinka-East-2015}
I. Dolinka and J. East,
\emph{Variants of finite full transformation semigroups},
Int. J. Algebra Comput. \textbf{25} (2015), no.~8,  1187--1222.

\bibitem{Dolinka-East-2018}
I. Dolinka and J. East,
\emph{Semigroups of rectangular matrices under a sandwich operation},
Semigroup Forum \textbf{96} (2018), no.~2, 253--300.

\bibitem{East-2018}
J. East,
\emph{Transformation representations of sandwich semigroups},
Exp. Math.  (2018) (doi: \break 10.1080/10586458.2018.1459963, to appear).



\bibitem{Eberhart-Selden-1969}
C. Eberhart and J. Selden,
\emph{ On the closure of the bicyclic semigroup},
Trans. Amer. Math. Soc. {\bf 144} (1969), 115--126.


\bibitem{Engelking-1989}
R.~Engelking,
\emph{General topology}, 2nd ed., Heldermann, Berlin, 1989.

\bibitem{Fihel-Gutik-2011}
I. R. Fihel and O. V. Gutik,
\emph{On the closure of the extended bicyclic semigroup},
Carpathian Math. Publ. \textbf{3} (2011), no.~2, 131--157.

\bibitem{Ganyushkin-Desiateryk-2013}
O. G. Ganyushkin and O. O. Desiateryk,
\emph{Variants of a semilattice},
Visn., Ser. Fiz.-Mat. Nauky, Kyiv. Univ. Im. Tarasa Shevchenka (2013), no.~4, 12--16.

\bibitem{Ganyushkin-Mazorchuk-2009}
O. Ganyushkin and V. Mazorchuk,
\emph{Classical finite transformation semigroups, an introduction},
\textbf{9} of Algebra and Appl., Springer, London,  2009.

\bibitem{Givens-Rosin-Linton-2016??}
B. N. Givens, A. Rosin, and K. Linton,
\emph{Interassociates of the bicyclic semigroup},
Semigroup Forum  \textbf{94} (2017), no.~1,  104--122.



\bibitem{Gutik-2015}
O. Gutik,
\emph{On the dichotomy of a locally compact semitopological bicyclic monoid with adjoined zero},
Visn. Lviv. Univ., Ser. Mekh.-Mat. \textbf{80} (2015), 33--41.


\bibitem{Gutik-Maksymyk-2016}
O. Gutik and K. Maksymyk,
\emph{On semitopological interassociates of the bicyclic monoid},
Visn. Lviv. Univ., Ser. Mekh.-Mat. \textbf{82} (2016), 98--108.

\bibitem{Gutik-Repovs-2007}
O.~Gutik and D.~Repov\v{s},
\emph{On countably compact $0$-simple topological inverse semigroups},
Semigroup Forum \textbf{75} (2007), no.~2, 464--469.






\bibitem{Hickey-1983}
J. B. Hickey,
\emph{Semigroups under a sandwich operation},
Proc. Edinb. Math. Soc., II. Ser. \textbf{26} (1983), no.~3, 371--382.



\bibitem{Hickey-1986}
J. B. Hickey.
\emph{On variants of a semigroup}.
Bull. Austral. Math. Soc. \textbf{34} (1986), no.~3, 447--459.



\bibitem{Hildebrant-Koch-1986}
J.~A.~Hildebrant and R.~J.~Koch,
{\it Swelling actions of $\Gamma$-compact semigroups}, Semigroup
Forum {\bf 33} (1986), 65--85.

\bibitem{Huang-1996}
W. Huang,
\emph{Matrices which belong to an idempotent in a sandwich semigroup of circulant Boolean matrices},
 	Linear Algebra Appl.  \textbf{249}   (1996), no.~1–3,  157--167.

\bibitem{Khan-Lawson-2001}
T. Khan and  M. Lawson,
\emph{Variants of regular semigroups}, Semigroup Forum \textbf{62} (2001), no.~3,  358--374.

\bibitem{Koch-Wallace-1957}
R.~J.~Koch and A.~D.~Wallace,
\emph{Stability in semigroups},
Duke Math. J. \textbf{24} (1957), 193--195.



\bibitem{Lawson-1998}
M.~Lawson,
\emph{Inverse semigroups. The theory of partial symmetries},
World Scientific, Sin\-gapore, 1998.

\bibitem{Petrich-1984}
M.~Petrich,
\emph{Inverse semigroups},
John Wiley $\&$ Sons, New York, 1984.


\bibitem{Mazorchuk-Tsyaputa-2008}
V. Mazorchuk and G. Tsyaputa,
\emph{Isolated subsemigroups in the variants of $\mathscr{T}_n$},
Acta Math. Univ. Comen., New Ser. \textbf{77} (2008), no.~1, 63--84



\bibitem{Ruppert-1984}
W.~Ruppert,
\emph{Compact semitopological semigroups: an intrinsic theory},
Lect. Notes Math., \textbf{1079}, Springer, Berlin, 1984.

\bibitem{Warne-1968}
R. J. Warne,
\emph{$I$-bisimple semigroups},
Trans. Amer. Math. Soc. \textbf{130}:3 (1968), 367--386.

\end{thebibliography}
\end{document}